\newtheorem{definition}{Definition}[subsection]
\newtheorem{example}{Example}[subsection]
\newtheorem{proposition}{Proposition}[section]
\newtheorem{corollary}{Corollary}[section]
\begin{document}
\author{~E.B.~Cohen, N.H.~Nguyen, J.~G.~Winde, \& A.A.~Yielding  \\ Eastern Oregon University}
\title{\textbf{INERTIA SETS FOR FAMILIES OF GRAPHS}}
\date{July 2, 2013} 
\maketitle

\begin{abstract}
\noindent
 This paper consists of a few results, discovered and proved during the 2012-2013 research group at Eastern Oregon University.    Inertia tables are a visual representation of the possible inertias of a given graph.  The inertia of a graph counts the number of real positive and negative eigenvalues of its corresponding adjacency matrix.  The problem of studying inertia tables is directly related to the inverse eigenvalue problem and can be used as a tool for the minimum rank problem.  This paper describes the inverse eigenvalue problem, and tools used.  We describe a number of new general formulas for various simple undirected graphs and improved upon an established notation for inertia tables.
\end{abstract}

\newtheorem{lemma}{Lemma}[section]
\newtheorem{thm}{Theorem}[section]

\section{Introduction}\label{sec:intro}
Inverse inertias of graphs were originally studied as a more feasible problem than computation of minimum rank, denoted $mr(G)$.  This work is discussed in the paper \textit{The Inverse Inertia Problem for Graphs} \cite{inertia}.  We were curious if inertias existed with ``indents" instead of the known charts with trapezoidal regions.  We have not found any yet. In this paper we found several formulas for inertia of various types of graphs.\newline \newline
Given a graph $G$, $S(G)$ is the set of all real symmetric $n \times n$ adjacency matrices, $A = [a_{ij}]$.  If $i \neq j$, there is a $0$ in this entry if there is no edge between vertices $i$ and $j$.  Otherwise, there is a non-zero entry.  A realization of this zero/non-zero pattern is when values have been selected for the non-zero entries.  Depending on which values are chosen, there are many adjacency matrices with different eigenvalues.  The inverse inertia problem doesn't ask which eigenvalues specifically are attainable, but how many positive ($\pi(A)$), negative ($\nu(A)$), and zero ($\delta(A)$) eigenvalues there are for $A$.  The ordered pair $(\pi(A),\nu(A))$ is called the partial inertia of $A$.  We can determine $\delta(A)$ by observing that $n - \pi(A) - \nu(A) = \delta(A)$.  The inertia of $G$ is the set of all possible ordered pairs of attainable eigenvalues. \newline \newline

\section{Definitions}\label{sec:def}

\subsection{Graph Definitions}

     An \textbf{undirected graph} is an ordered pair $G=(V, \varepsilon)$ with the following properties:
    \begin{enumerate}
        \item The first component, $V$, is a finite, non-empty set.  The elements of $V$ are called the \textit{\textbf{vertices}} of $G$.
        \item The second component, $\varepsilon$, is a finite set of sets.  Each element of $\varepsilon $ is a set that is comprised of exactly two distinct vertices.  The elements of $ \varepsilon$ are called the \textit{\textbf{edges}} of $G$.
    \end{enumerate}
    The \textit{\textbf{degree}} of a vertex $v$ is the number of edges incident to $v$.
    The \textit{\textbf{union}} of a graph $G$ with a graph $H$, denoted $G\bigcup H$, is the graph $(V(G)\bigcup V(H),\varepsilon(G)\bigcup\varepsilon(H))$.
    If G and H are graphs on at least two vertices, each with a vertex labeled $v$, then $G$ \textit{\textbf{join}} $H$ at vertex $v$, denoted $\displaystyle G\bigoplus_vH$, is the graph on $|G|+|H|-1$ vertices obtained by identifying the vertex $v$ in $G$ with the vertex $v$ in $H$.
    A \textit{\textbf{path}}, denoted $P_n$, is a string of $n$ vertices where each vertex is of degree $2$, except the endpoints, which are of degree $1$.
    A \textit{\textbf{cycle}}, denoted $C_n$, is a closed path on $n$ vertices.
    A \textit{\textbf{pendant path}} is a path in a graph $G$ such that one endpoint is connected by an edge to a vertex in G that is not included in the pendant path.
    A \textit{\textbf{generalized star}}, denoted $G_p$, is a graph composed of $p$ paths joined at a common vertex, the center vertex.
    A \textit{\textbf{bouquet}} is a graph composed of cycles joined at a common vertex, the center vertex.
    A \textit{\textbf{supernova}} is a graph consisting of a generalized star, $G_p$, and a bouquet, $H$, with center vertices labeled $v$ such that $\displaystyle G_p\bigoplus_vH$.
    A \textit{\textbf{pulsar}} is a graph consisting of two supernovas, $G$ and $H$ with center vertices $v$ and $u$ respectively, and a cycle, $C_n$, such that $\displaystyle G\bigoplus_vC_n\bigoplus_uH$ where $vu\in V(C_n)$ but $vu\notin E(C_n)$.
    A \textit{\textbf{binary star}} is a graph consisting of two supernovas, $G$ and $H$ with center vertices $v$ and $u$ respectively, and a path, $P_n$, such that $\displaystyle G\bigoplus_vP_n\bigoplus_uH$ where $vu\in V(P_n)$.
    A \textit{\textbf{bipartite graph}} is a graph whose vertex set can be split into sets $A$ and $B$ such that each edge is incident to one vertex in $A$ and one vertex in $B$. A \textit{\textbf{complete bipartite graph}}, denoted $K_{a,b}$ for $a=|A|$ and $b=|B|$, is a bipartite graph where every possible edge is present.

\subsection{Linear Algebra Definitions}
    Given a matrix $A$ and the scalar $\lambda$ and the nonzero vector $\mathbf{p}$ which satisfy $A\mathbf{p} = \lambda \mathbf{p}$, $\lambda$ is called an \textbf{eigenvalue} of $A$. The Eigenvalues are a special set of scalars associated with a linear system of equations and are called the characteristic roots.\\

    \noindent
    If $G$ is a graph with $n$ vertices, then its \textbf{adjacency matrix}, $A$, is an $n \times n$ matrix, where each row and column corresponds to a vertex of $G$. The element $a_{i,j} \in A$ specifies the number of edges incidence to vertex $i$ to vertex $j$. Note an adjacency matrix for an undirected graph is symmetric about the main diagonal. We only consider simple undirected graph in this paper so we label the entries of a realization with the weight of the edges, see example \ref{inertia table example}.\\

    \noindent
    Let $A$ be an $n \times n$ matrix, then we say A is symmetric if $A=A^T$.
    Given an undirected graph $G$, the \textbf{inertia} of $G$ is the ordered triple $(\pi(A),\nu(A),\delta(A))$, where $\pi(A)$ is the number of positive eigenvalues of A, $\nu(A)$ is the number of negative eigenvalues of $A$, and $\delta(A)$ is the multiplicity of $0$ as an eigenvalue of $A$. Also $\pi(A)+ \nu(A)+\delta(A)=n$ and $\pi(A)+\nu(A)=rank(A)$. The \textbf{Inertia Table} of $G$, denoted $I(G)$, is a set of points representing all the possible positive and negative eigenvalues.\\

    \noindent
    The \textbf{minimum rank line} of a graph $G$ consists of all points $(\pi(A),\nu(A))\in I(G)$ such that $\pi(A)+\nu(A)=mr(G)$.\\

    \noindent
    \begin{example}\label{inertia table example}
    The graph $P_3$, which is an example of $K_{1,2}$
    \begin{center}
    \includegraphics[width=.30\textwidth]{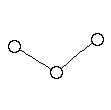}
    \end{center}
    can be represented by an adjacency matrix:
\[ \left[ \begin{array}{ccc}
r_1 & a & 0 \\
a & r_2 & b \\
0 & b & r_3 \end{array} \right]\] where $r_1,r_2,r_3 \in \mathbb{R}$, and $a,b \neq 0$.\\
Here is a realization of $P_3$:\\
\begin{center}
$A=$ $\left[
  \begin{array}{ccc}
    0 & 1 & 0 \\
    1 & 0 & 1 \\
    0 & 1 & 0 \\
  \end{array}
\right]$
\end{center}

This is a $3 \times 3$ symmetric matrix which has rank $2$ since $(\pi(A)=1, \nu(A)=1)$. Hence the inertia table correspond to this realization is:\\
\begin{center}
            \includegraphics[width=.40\textwidth]{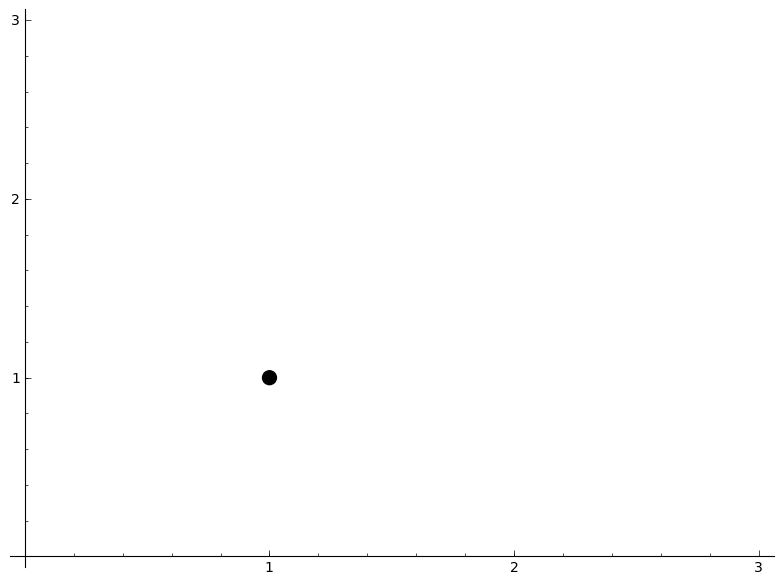}
\end{center}

\noindent
Other realizations of $P_3$:\\
\begin{center}
$B=$ $\left[
  \begin{array}{ccc}
    1 & 5 & 0 \\
    5 & 1 & 3 \\
    0 & 3 & 5 \\
  \end{array}
\right]$
\hspace{1cm}
$C=-$ $\left[
  \begin{array}{ccc}
    1 & 5 & 0 \\
    5 & 1 & 3 \\
    0 & 3 & 5 \\
  \end{array}
\right]$

\end{center}

\noindent
These realizations has rank $3$ since ($\pi(B)=2, \nu(B)=1)$ and ($\pi(C)=1, \nu(C)=2$).  Hence the inertia table for these realizations is:\\
\begin{center}
            \includegraphics[width=.40\textwidth]{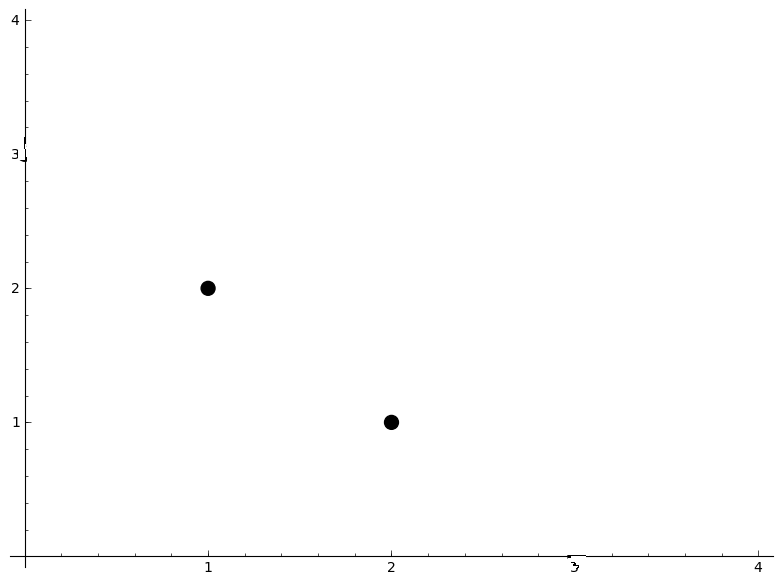}
\end{center}
\noindent
Therefore the inertia table for all realizations above is:\\
\begin{center}
        \includegraphics[width=.40\textwidth]{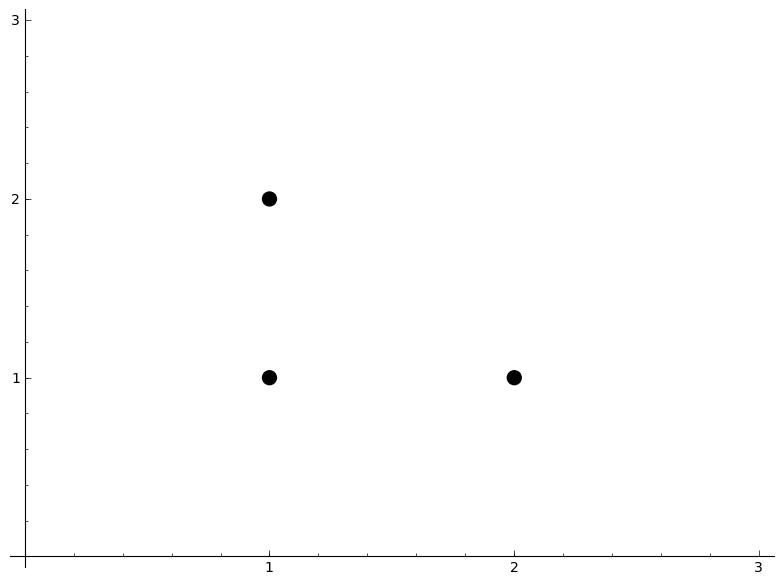}
\end{center}
\noindent
The inertia table above can be represent as $T^1_{[2,3]}$. Note that these are not the eigenvalues themselves, they are the number of negative and positive eigenvalues.\\
    \end{example}

\section{Notations and Conventions}\label{sec:notation}
In this section we introduce the $T$ notation for the inertia tables.  This is used to represent all the points on the inertia table.  Listing out all the points for an inertia table, or adding up two different inertias, can be tedious.  The $T$ notation allows us to do just that in a very simple and effective way.

\subsection{Understanding the $T$ notation}
The $T$ notation has the form $T_{[m,n]}^{k}$ for some $k$ and $m\leq n$, which are non-negative integers. The value $m$ represents the minimum rank, whereas $n$ represents the maximum, and $k$ indicates the inset from the axes. For convenience we often leave $k$ out of the notation when $k=0$. \\

\noindent
\begin{example}
        From example \ref{inertia table example} we saw $T^1_{[2,3]}$ represents the set of points:\\
	    $\{(1,1),(1,2),(2,1)\}$. Which has the inertia table below.\\
       \begin{center}
            \includegraphics[width=.40\textwidth]{I1}
        \end{center}
        \noindent
        Notice the inset from the axis is $1$, therefore $k=1$. The minimum rank is $2$ and the maximum rank is $3$.\\

        \noindent
        Similarly $T_{[2,3]}$ represents the set of points: $\{(0,2),(1,1),(2,0),(0,3),(1,2),(2,1),(3,0)\}$.\\
       \begin{center}
            \includegraphics[width=.40\textwidth]{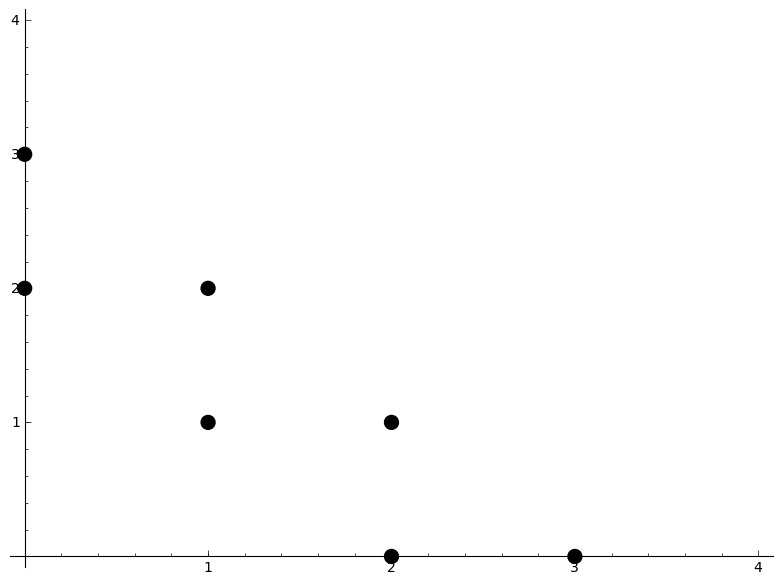}
        \end{center}
        \noindent
        Notice no inset from the axis, therefore $k=0$. Also observe this has minimum rank of $2$ and maximum rank of $3$.\\
\end{example}

  \noindent
    \begin{definition}\label{trapezoid}
    We say a graph $G$ has a \textbf{trapezoidal} inertia if $I(G)=T^{}_{[m,n]}$ for some non-negative integers $n$ and $m$.\\
    \end{definition}

\noindent
To represent an inertia that is not trapezoidal we must union several trapezoids together.  This can be represented as $\displaystyle T_{[m_1,n_1]}^{k_1} \bigcup T_{[m_2,n_2]}^{k_2} \bigcup T_{[m_3,n_3]}^{k_3} \bigcup ... \bigcup T_{[m_l,n_l]}^{k_l}$ $=\bigcup_{i=1}^l T^{k_i}_{[m_i,n_i]}$ for $i=0,1,2...l$.\\

\noindent
\begin{example}
    $\displaystyle T_{[3,4]} \bigcup T^1_{[2,2]}$ represents the set of points: \\
\{$(1,1),(0,3),(1,2),(2,1),(3,0),(0,4),(1,3),(2,2),(3,1),(4,0)$\}.  Note these points do not form a table of inertia with a trapezoidal shape because of the bump $(1,1)$.\\
    \begin{center}
    \includegraphics[width=.40\textwidth]{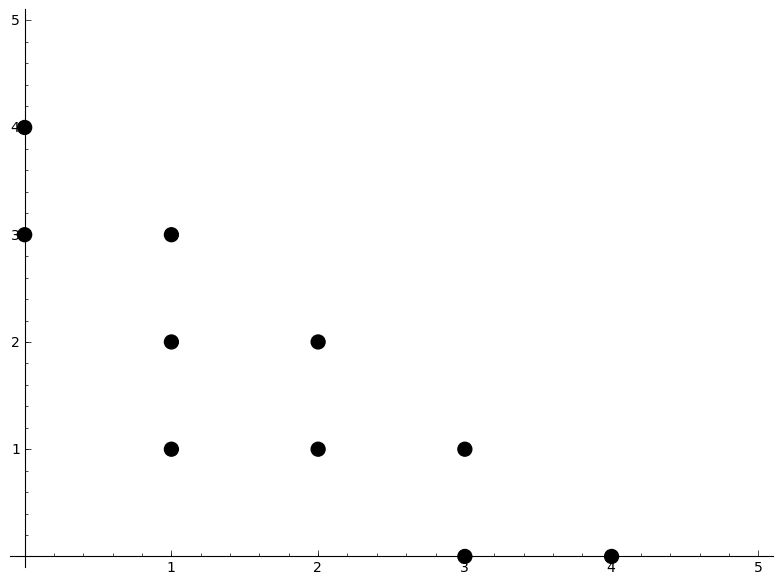}
    \end{center}
\end{example}

\subsection{Adding with $T$ notation}
In many cases we must add inertias. For example if you want to add $I(G_1) + I(G_2) +...+I(G_p)$ together, the computations can be very tedious when performing pointwise addition. Adding inertias using the $T$ notation helps to save time and minimize mistakes.\\
Suppose $I(G_i)=T^{k_i}_{[m_i,n_i]}$ for $i=0,1,2...,p$. Then $\displaystyle \sum^p_{i=0} I(G_i)$ $=I(G_1) + I(G_2) +...+I(G_p)$ $\displaystyle= T^{\sum^p_{i=0} k_i }_{[\sum^p_{i=0} m_i, \sum^p_{i=0} n_i]}$.\\

\noindent
\begin{example}
    Let $I(G_1)= T_{[2,3]}$, $I(G_2)=T^1_{[4,5]}$, $I(G_3)=T^2_{[3,6]}$ and $I(G_4)=T^3_{[4,8]}$.
    Then
    \begin{align*}
    I(G_1)+I(G_2)+I(G_3)+I(G_4) &=T_{[2,3]}+T^1_{[4,5]}+T^2_{[3,6]} + T^3_{[4,8]}\\
                                &=T^{0+1+2+3}_{[2+4+3+4,3+5+6+8]}\\
                                &=T^6_{[13,22]}
    \end{align*}
\end{example}

\noindent
Graphs with non-trapezoidal inertias are of the form $\displaystyle I(G)=\bigcup T^{k_j}_{[m_i,n_i]}$. So adding non-trapezoid inertia will require the use of distribution property within the union operator.
\noindent
\begin{example}
Let $\displaystyle I(G_1)= T_{[2,3]}\bigcup T_{[3,4]}$ and $\displaystyle I(G_2)=T_{[2,5]} \bigcup T_{[5,7]} $. Then
\begin{align*}
	I(G_1) + I(G_2)&= (T_{[2,3]}\bigcup T_{[3,4]})+ (T_{[2,5]} \bigcup T_{[5,7]})\\
	               &= (T_{[2,3]}+T_{[2,5]} \bigcup T_{[2,3]}+T_{[5,7]}) \bigcup (T_{[3,4]}+T_{[2,5]} \bigcup
                       T_{[3,4]}+T_{[5,7]})\\
                   &= (T_{[4,8]}\bigcup T_{[7,10]}) \bigcup (T_{[5,9]} \bigcup T_{[8,11]})\\
                   &= (T_{[4,7]}\bigcup T_{[7,10]}) \bigcup (T_{[5,8]} \bigcup T_{[8,11]})\\
                   &= T_{[4,10]}\bigcup T_{[5,11]}\\
                   &= T_{[4,11]}
	\end{align*}
Observed in the last three steps we moved the upper rank on some down since we do not want to have overlap in the notation. For instance in the fourth step we moved $T_{[4,8]}$ to $T_{[4,7]}$ and $T_{[5,8]}$ to $T_{[5,7]}$.

\end{example}

\section{Useful Tools}\label{sec:known theorems}
In this section we gather together several tools that allow us to compute the inertia tables for various types of graphs. All of these tools are from \textit{The Inverse Inertia Problem for Graphs} \cite{inertia}.

%The following results are from Observation $4.1$ and Theorem $4.2$ from $[1]$.
\indent

\begin{proposition}\label{disjoint}
    Let $\displaystyle G = \bigcup^k_{i=1} I(G_i)$. Then $I(G)= I(G_1) + I(G_2) + ...+I(G_k)= \sum^k_{i=1} G_i$ \cite{inertia}.\\
    \end{proposition}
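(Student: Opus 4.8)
The plan is to exploit the block-diagonal structure that a disjoint union forces on every realization. Since $G = \bigcup_{i=1}^k G_i$ is a disjoint union, there are no edges joining distinct components $G_i$ and $G_j$; hence for any $A \in S(G)$ the off-diagonal blocks corresponding to different components are forced to be zero, and $A$ is a block-diagonal matrix whose diagonal blocks $A_1, \dots, A_k$ satisfy $A_i \in S(G_i)$. Conversely, any block-diagonal matrix assembled from blocks $A_i \in S(G_i)$ lies in $S(G)$, since reassembling the blocks reproduces exactly the zero/nonzero pattern of $G$.

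First I would record the eigenvalue fact: for a block-diagonal matrix the characteristic polynomial factors as $\det(A - \lambda I) = \prod_{i=1}^k \det(A_i - \lambda I)$, so the multiset of eigenvalues of $A$ is the union (with multiplicity) of the eigenvalue multisets of the blocks $A_i$. In particular $\pi(A) = \sum_{i=1}^k \pi(A_i)$ and $\nu(A) = \sum_{i=1}^k \nu(A_i)$, so the partial inertia of $A$ is the pointwise sum of the partial inertias of its blocks.

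With this in hand, the two set containments that together give $I(G) = \sum_{i=1}^k I(G_i)$ follow. For the forward inclusion, any $A \in S(G)$ decomposes as above, so its partial inertia $(\pi(A), \nu(A)) = \sum_{i=1}^k (\pi(A_i), \nu(A_i))$ is a sum of points drawn from the individual $I(G_i)$, hence lies in $\sum_{i=1}^k I(G_i)$. For the reverse inclusion, given a point of the sum, say $(p,q) = \sum_{i=1}^k (p_i, q_i)$ with each $(p_i, q_i) \in I(G_i)$, I would choose for each $i$ a realization $A_i \in S(G_i)$ achieving $(p_i, q_i)$ and assemble the block-diagonal matrix $A$ with these blocks; then $A \in S(G)$ and, by the eigenvalue fact, $A$ realizes $(p,q)$, so $(p,q) \in I(G)$.

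The step I would treat most carefully is the reverse inclusion: it relies on the blocks being choosable independently and then combinable without interference, which is exactly what the absence of cross-component edges guarantees, since there are no off-diagonal weights that would need to be coordinated between components. Once the block-diagonal decomposition is established in both directions, the result is just additivity of inertia under direct sums, and this additivity is precisely the pointwise addition encoded by the $T$-notation sum introduced in Section~\ref{sec:notation}.
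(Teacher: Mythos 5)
Your proof is correct, but note that the paper itself never proves this proposition: it appears in Section~\ref{sec:known theorems} as an imported tool, stated without argument and attributed to \cite{inertia}. Your block-diagonal argument is the standard one underlying that citation, and it is complete: you correctly establish both directions — that every $A \in S(G)$ decomposes into blocks $A_i \in S(G_i)$ whose partial inertias add (via the factorization $\det(A - \lambda I) = \prod_{i=1}^k \det(A_i - \lambda I)$), and conversely that independently chosen realizations $A_i$ can be assembled into a block-diagonal $A \in S(G)$ realizing any point of the Minkowski sum $\sum_{i=1}^k I(G_i)$. The reverse inclusion is indeed the step worth emphasizing, as you do, since it is what makes the sum an equality of sets rather than a mere containment; this is also exactly what justifies the paper's use of pointwise $T$-notation addition (as in Lemma~\ref{paths}) for disjoint unions. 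So rather than matching or diverging from the paper's route, your write-up supplies the proof the paper delegates to its reference.
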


    \begin{thm}\label{pathinertia}
    If $G$ is a graph such that $G\cong P_n$ then $I(G)= T^{}_{[n-1,n]}$ \cite{inertia}.\\
    \end{thm}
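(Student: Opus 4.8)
The plan is to prove that $I(P_n) = T_{[n-1,n]}$, which asserts two things: the minimum rank of $P_n$ is $n-1$, the maximum rank is $n$, and that \emph{every} partial inertia $(\pi,\nu)$ with $n-1 \le \pi+\nu \le n$ is attainable (with no inset, so $k=0$). I would attack this directly by exhibiting realizations, since a path's adjacency matrix is tridiagonal and hence very tractable.

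\medskip
\noindent
First I would pin down the rank bounds. The maximum rank cannot exceed $n$ trivially. To see the minimum rank is exactly $n-1$, I would use the tridiagonal structure: for a realization of $P_n$, the matrix $A = [a_{ij}]$ has nonzero off-diagonal entries $a_{i,i+1}$ precisely along the super/subdiagonal, and all superdiagonal entries are nonzero by the definition of a realization. I would argue that the leading $(n-1)\times(n-1)$ principal submatrix obtained by deleting the first row and last column is lower-triangular with nonzero diagonal (those superdiagonal weights), so it has a nonzero minor of size $n-1$; hence $\mathrm{rank}(A) \ge n-1$ always, giving $mr(P_n) \ge n-1$. Choosing zero diagonal entries when $n$ is even (or a suitable tridiagonal realization) shows a rank exactly $n-1$ is achievable, so $mr(P_n) = n-1$.

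\medskip
\noindent
Next, the heart of the argument is realizing \emph{all} the required inertia points. The cleanest route is to choose diagonal entries $r_i$ and then compute eigenvalue signs via Sylvester's law of inertia applied to the sequence of leading principal minors of a tridiagonal matrix. For a tridiagonal matrix the leading principal minors $D_0, D_1, \dots, D_n$ satisfy the three-term recurrence $D_k = r_k D_{k-1} - a_{k-1,k}^2 D_{k-2}$, and the number of sign agreements versus sign changes in this sequence counts the positive and negative eigenvalues. The plan is to show that by tuning the diagonal entries $r_i$ I can independently steer each sign, thereby hitting every pair $(\pi,\nu)$ with $\pi+\nu = n$ (the full-rank line), and then hitting every pair on the $\pi+\nu = n-1$ line by forcing exactly one zero eigenvalue (e.g. by making $D_n = 0$ while keeping $D_{n-1} \neq 0$). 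Together these two lines are exactly the point set $T_{[n-1,n]}$.

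\medskip
\noindent
The hard part will be the bookkeeping that guarantees \emph{every} point on both rank lines is reached, rather than merely showing the two endpoint inertias. I expect the cleanest way around this obstacle is an inductive construction on $n$: assuming $P_{n-1}$ achieves its full inertia table, I would append one vertex and one edge and track how the inertia shifts by a controllable $(+1,0)$ or $(0,+1)$ (and occasionally lands on the zero-eigenvalue line) as the new diagonal entry is varied, using interlacing of eigenvalues of the bordered matrix. Interlacing guarantees the new eigenvalue can be pushed to either side of zero, which is precisely the freedom needed to fill in the trapezoid. Verifying that this induction covers the full set without gaps, and handling the parity issue at the minimum-rank line, is where the real care is required.
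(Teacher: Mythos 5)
The paper never actually proves this statement: it appears in the ``Useful Tools'' section and is simply cited from \cite{inertia}, where it arises as a special case of that paper's general treatment of trees. So your proposal is, by necessity, a different route --- a direct, self-contained argument exploiting the tridiagonal structure of path realizations --- and it is a workable one. The minor you exhibit (rows $2,\dots,n$ against columns $1,\dots,n-1$, triangular with the nonzero edge weights on its diagonal) is the standard proof that every realization has rank at least $n-1$; note only that this submatrix is not a \emph{principal} submatrix, though nothing in the argument needs it to be. Steering signs through the three-term recurrence $D_k = r_k D_{k-1} - a_{k-1,k}^2 D_{k-2}$ does fill out the whole trapezoid: whenever $D_{k-1}\neq 0$, the choice of $r_k$ can place $D_k$ on either side of zero, so every sign sequence of leading principal minors (hence every $(\pi,\nu)$ with $\pi+\nu=n$, by the classical sign-change count) is attainable; forcing $D_n=0$ while keeping $D_{n-1}\neq 0$, combined with Cauchy interlacing against the leading $(n-1)\times(n-1)$ block, gives every point with $\pi+\nu=n-1$, and simplicity of the eigenvalues of an irreducible tridiagonal matrix guarantees the nullity there is exactly one. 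What your route buys is an elementary, constructive proof with explicit realizations; what the citation buys the paper is brevity and a statement embedded in a theory covering all trees. Two small repairs: your parity claim is backwards --- the all-zero-diagonal realization of $P_n$ has eigenvalues $2\cos\bigl(k\pi/(n+1)\bigr)$ and is singular exactly when $n$ is \emph{odd}, not even; and the sign-agreement/sign-change rule is only valid when the intermediate leading minors are nonzero, so your construction should state explicitly that it keeps $D_1,\dots,D_{n-1}$ nonzero, which your freedom in choosing the diagonal entries easily allows.
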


    \begin{thm}\label{cycleinertia}
    If $G$ is a graph such that $G \cong C_n$ then $I(G)= T^{}_{[n-2,n]}$ \cite{inertia}.\\
    \end{thm}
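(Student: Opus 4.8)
The plan is to prove the two inclusions $I(C_n)\subseteq T_{[n-2,n]}$ and $T_{[n-2,n]}\subseteq I(C_n)$ separately. Since $k=0$, recall that $T_{[n-2,n]}$ is exactly the set of pairs $(\pi,\nu)$ with $\pi,\nu\ge 0$ and $n-2\le \pi+\nu\le n$. Thus the first inclusion is the rank bound $n-2\le \mathrm{rank}(A)\le n$ for every $A\in S(C_n)$, and the second is the statement that each admissible $(\pi,\nu)$ is realized by some weighting of the cycle.

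For the containment direction, the bound $\mathrm{rank}(A)\le n$ is automatic, so the content is the lower bound, and here I would combine interlacing with the path theorem. Deleting any single vertex of $C_n$ leaves $P_{n-1}$, so for $A\in S(C_n)$ the associated $(n-1)\times(n-1)$ principal submatrix $A'$ carries the zero/nonzero pattern of $P_{n-1}$. By Theorem~\ref{pathinertia}, $I(P_{n-1})=T_{[n-2,n-1]}$, so $\mathrm{rank}(A')\ge n-2$. Cauchy interlacing then gives $\pi(A)\ge \pi(A')$ and $\nu(A)\ge \nu(A')$, whence $\mathrm{rank}(A)=\pi(A)+\nu(A)\ge \pi(A')+\nu(A')=\mathrm{rank}(A')\ge n-2$. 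Therefore every attainable $(\pi,\nu)$ satisfies $n-2\le \pi+\nu\le n$ and lies in $T_{[n-2,n]}$.

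For achievability I would treat the rank lines from the top down. Fixing all edge weights equal to a single small $\varepsilon\ne 0$ and choosing the $n$ diagonal entries to be $\pi$ positive and $\nu=n-\pi$ negative values makes $A$ a small symmetric perturbation of a nonsingular diagonal matrix; for $\varepsilon$ small its eigenvalues stay near the distinct nonzero diagonal entries, so $A$ realizes $(\pi,n-\pi)$ for every $\pi$, giving the whole maximum-rank line. To reach rank $n-1$, I would fix $n-1$ diagonal entries to prescribed nonzero signs ($\pi$ positive, $\nu$ negative) and tune the remaining entry $d_j$: the determinant is affine in $d_j$ and changes sign, so some value makes $\det A=0$, and because only the eigenvalue near $d_j$ is close to zero, the nullity there is exactly $1$. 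This produces every $(\pi,\nu)$ with $\pi+\nu=n-1$.

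The main obstacle is the minimum-rank line $\pi+\nu=n-2$, where one must force nullity exactly $2$ with a prescribed signature. My approach would be to first exhibit one explicit weighting with a two-dimensional kernel — for instance, zero diagonal together with edge weights alternating in sign, so that the all-ones vector and the alternating sign vector both lie in the kernel — which by the already-established bound $\delta\le 2$ has rank exactly $n-2$; one then sweeps its signature across the entire line by a two-parameter deformation of two diagonal entries, using an intermediate-value or degree argument to guarantee the two eigenvalues nearest $0$ can be made to vanish simultaneously. Making this \emph{simultaneous} double crossing rigorous, rather than the single crossing needed for rank $n-1$, is the delicate point, and the parity obstruction in the explicit kernel construction (the alternating vector closes up only for even $n$) is the part I expect to require the most care, forcing a separate construction for odd $n$.
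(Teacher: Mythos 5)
You should first note that the paper never proves this theorem: it appears in Section 4 as an imported tool, cited to Barrett, Hall and Loewy \cite{inertia}, so there is no internal proof to compare against and your attempt must stand on its own. Most of it does. The containment $I(C_n)\subseteq T_{[n-2,n]}$ via Cauchy interlacing against the principal submatrix on $P_{n-1}$ together with Theorem \ref{pathinertia} is correct, and your perturbative constructions for the rank-$n$ and rank-$(n-1)$ lines are sound (modulo the routine check that the relevant principal minor is nonzero, so the determinant really is a nonconstant affine function of $d_j$).

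The genuine gap is exactly where you flagged delicacy: the minimum rank line $\pi+\nu=n-2$. The plan of "one explicit nullity-$2$ realization plus a two-parameter deformation and a degree/intermediate-value argument to sweep the signature" is not a proof, and it cannot be repaired in the generality in which it is phrased, because it uses nothing specific to the cycle. Applied verbatim to the star $K_{1,3}$ — which likewise has an explicit nullity-$2$, signature-$(1,1)$ realization (zero diagonal, all edge weights $1$) — it would "produce" the point $(2,0)$ with $\pi+\nu=2=mr(K_{1,3})$; but $(2,0)\notin I(K_{1,3})$, since by the paper's own Lemma \ref{gs} we have $I(K_{1,3})=T_{[3,4]}\bigcup T^1_{[2,4]}$, whose minimum rank line has inset $k=1$. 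So changing signature along the minimum rank line is obstructed in general, and any correct argument must exploit cycle structure. In particular, the endpoints $(n-2,0)$ and $(0,n-2)$ amount to the statement that the minimum \emph{semidefinite} rank of $C_n$ equals $n-2$, a nontrivial fact normally established by an explicit orthogonal representation ($n$ vectors in $\mathbb{R}^{n-2}$ with consecutive pairs non-orthogonal and all other pairs orthogonal), with the intermediate signatures obtained by analogous constructions in indefinite inner product spaces; once the whole line is realized, the rest of $T_{[n-2,n]}$ follows from your upper two lines or from the Northeast-type results of \cite{inertia}. Your even/odd parity worry is real but secondary: the missing idea is a realization scheme for \emph{every} signature on the line, which no perturbation or intermediate-value argument starting from a single example can supply.
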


\noindent
Theorem \ref{The Theorem} is a significantly important theorem of finding inertia for graphs and is our main tool that allows us to come up with general formulas for the inertias of various graphs.

    \begin{thm}\label{The Theorem}
    Let $F$ and $G$ be graphs on at least two vertices with exactly one common vertex $v$ and let $n=|F| + |G|-1$. Then
    $\displaystyle I(F \bigoplus_v G)$ $= [I(F) + I(G)]_n$ $\bigcup$ $[I(F-v) + I(G-v) + {T^1_{[2,2]}}]_n$. \\
    \end{thm}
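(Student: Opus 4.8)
\noindent
The plan is to place the shared vertex $v$ last and to read the inertia of $F\bigoplus_v G$ off a single bordered symmetric matrix. Ordering the vertices as those of $F-v$, then those of $G-v$, then $v$, and using that $F$ and $G$ meet only at $v$ (so no edge joins $F-v$ to $G-v$), every $A\in S(F\bigoplus_v G)$ has the form
\[
A=\begin{pmatrix} M & z\\ z^{T} & c\end{pmatrix},\qquad M=\begin{pmatrix} B & 0\\ 0 & D\end{pmatrix},\qquad z=\begin{pmatrix} x\\ y\end{pmatrix},
\]
where $B\in S(F-v)$, $D\in S(G-v)$, the vector $z$ carries the edges incident to $v$ (with the forced zero/nonzero pattern), and $c\in\mathbb{R}$ is free. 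Since $M$ realizes the disjoint union $(F-v)\cup(G-v)$, Proposition \ref{disjoint} shows the partial inertia of $M$ ranges over $I(F-v)+I(G-v)$.

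\noindent
First I would invoke the bordered form of Cauchy interlacing, which splits into two cases according to whether $z\in\mathrm{range}(M)$. If $z\notin\mathrm{range}(M)$, bordering splits a zero eigenvalue into a positive--negative pair, so the partial inertia of $A$ equals that of $M$ plus $(1,1)$, independently of $c$. If $z\in\mathrm{range}(M)$, then writing $z=Mw$ shows $A$ is congruent to $M\oplus[\,c-z^{T}w\,]$; as $c$ runs over $\mathbb{R}$ the Schur scalar takes every sign, so the partial inertia of $A$ equals that of $M$ plus one of $(1,0)$, $(0,1)$, $(0,0)$.

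\noindent
The out-of-range case produces exactly $[\,I(F-v)+I(G-v)+T^{1}_{[2,2]}\,]_n$, since $T^{1}_{[2,2]}$ is the single point $(1,1)$; here I must confirm that any prescribed pair in $I(F-v)+I(G-v)$ can be met by $B,D$ for which a legitimately patterned $z$ lies outside $\mathrm{range}(M)$, which is available precisely because attaining such a pair (after the rank cap) forces $M$ to be singular in a direction the pattern of $z$ can reach. For the in-range case I would match the points of $A$ against $[\,I(F)+I(G)\,]_n$: applying the same bordered lemma inside $F$ and inside $G$ separately writes each point of $I(F)$ as a point of $I(F-v)$ plus a border adjustment, and likewise for $G$, so $[\,I(F)+I(G)\,]_n$ is the set of pairs obtainable from $I(F-v)+I(G-v)$ by a combined adjustment, truncated to rank at most $n$.

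\noindent
The hard part will be reconciling the two pictures in the in-range case: the single border of $A$ contributes only one unit of adjustment, whereas the Minkowski sum $I(F)+I(G)$ appears to permit an independent adjustment on each side. The resolution I would pursue is that a point needing a two-unit boost is instead realized over a \emph{different} pair of inner inertias, which is legitimate exactly because inertia tables are rank-monotone (northeast-closed below their maximum rank): whenever a pair of rank below the maximum lies in $I(F-v)$ or $I(G-v)$, the pair with one more positive, or one more negative, also lies there. Combined with the cap $[\,\cdot\,]_n$, which discards the rank-$(n+1)$ points of $I(F)+I(G)$ that the $n\times n$ matrix $A$ cannot carry, this lets every point of $[\,I(F)+I(G)\,]_n$ be absorbed into a single-border realization. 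Having pinned down both cases, I would close with the two containments: every $A$ falls into one case, giving $I(F\bigoplus_v G)\subseteq$ the stated union, and each listed point is produced by an explicit choice of $B,D,z,c$, giving the reverse; the residual delicacy throughout is realizability under the fixed edge pattern of $z$, where the hypothesis $|F|,|G|\ge 2$ is used to guarantee that $v$ actually carries edges into each side.
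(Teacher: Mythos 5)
You should first know that this paper does not prove the theorem at all: it appears in the ``Useful Tools'' section and is quoted from Barrett, Hall, and Loewy \cite{inertia}, so your proposal must be judged against what a complete argument requires rather than against a proof in this paper. Your machinery is the right machinery, and it is essentially the toolkit of \cite{inertia}: order $v$ last, write every $A\in S(F\bigoplus_v G)$ as a bordering of $M=B\oplus D$, split on $z\in\mathrm{range}(M)$ versus $z\notin\mathrm{range}(M)$, use the Schur scalar $c-z^{T}w$ in the first case and the forced $(1,1)$ jump in the second, and patch Minkowski-sum mismatches with northeast closure. The forward containment can indeed be completed along the lines you sketch.

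The genuine gap is your realizability claim for $[\,I(F-v)+I(G-v)+T^1_{[2,2]}\,]_n$: you assert that any pair surviving the rank cap ``can be met by $B,D$ for which a legitimately patterned $z$ lies outside $\mathrm{range}(M)$,'' because the cap ``forces $M$ to be singular in a direction the pattern of $z$ can reach.'' That is false. Take $F=G=K_{1,4}$ with $v$ a leaf of each, so $n=9$, $F-v=G-v=K_{1,3}$, and the only neighbors of $v$ are the two centers. Any $B\in S(K_{1,3})$ with partial inertia $(1,1)$ has rank $2$, hence must have a zero diagonal entry at some leaf (otherwise the nonsingular diagonal block on the three leaves forces rank at least $3$); that leaf's row then reads $a_iu_{\mathrm{center}}=0$ for every $u\in\ker(B)$, so every kernel vector vanishes at the center coordinate, so $e_{\mathrm{center}}\in\ker(B)^{\perp}=\mathrm{range}(B)$, and every admissible $x$ (a nonzero multiple of $e_{\mathrm{center}}$) lies in $\mathrm{range}(B)$. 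The same holds for $D$, so every admissible $z$ lies in $\mathrm{range}(M)$ no matter how singular $M$ is, and bordering over the inner pair $((1,1),(1,1))$ can only produce $(2,2)$, $(3,2)$, $(2,3)$ --- never the second-set point $(3,3)=(1,1)+(1,1)+(1,1)$, whose rank $6$ is well under the cap $n=9$. Worse, $(3,3)$ admits no other decomposition inside $I(K_{1,3})+I(K_{1,3})+T^1_{[2,2]}$, so no out-of-range bordering realizes it at all: it enters $I(F\bigoplus_v G)$ only through the in-range mechanism over a different inner pair, e.g.\ $B$ with inertia $(1,1)$ and $D$ with inertia $(1,2)$ chosen with kernel invisible at the center, which is exactly a point of $[\,I(F)+I(G)\,]_n$. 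So your case plan --- second-set points from out-of-range borderings, first-set points from in-range ones --- is structurally wrong, and the missing content is showing that every point of the stated union is reached by one of the two mechanisms when one is free to vary both the inner inertias and their realizations; that interplay is precisely where the length and difficulty of the proof in \cite{inertia} lies. (A lesser issue: the northeast-closure property you invoke is not among this paper's tools and needs its own justification, though a small diagonal perturbation $B+\epsilon e_ie_i^{T}$ at a coordinate where a kernel vector is nonzero supplies it.)
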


    \begin{example}
    Suppose $F=C_5$ and $G= P_3$.  $\displaystyle H = F \bigoplus_v G$ is the graph below; where $v$ is the degree $2$ vertex in $G$.
    \newline
    \begin{center}
     \includegraphics[width=.40\textwidth]{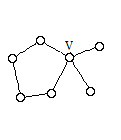}
    \end{center}
    \noindent
    Hence $I(F)= T^{}_{[3,5]}$ by theorem \ref{cycleinertia}
    \begin{center}
    \includegraphics[width=.40\textwidth]{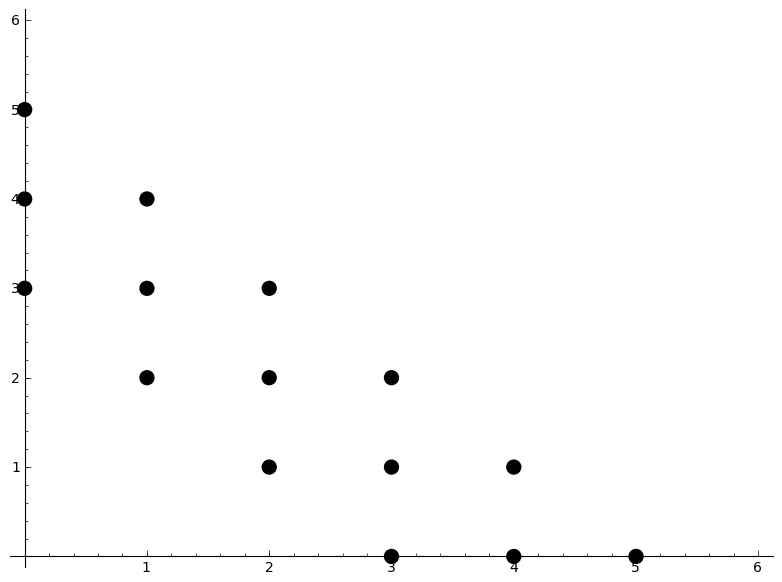}
    \end{center}
    Similarly, $I(G)=T^{}_{[2,3]}$ by theorem \ref{pathinertia}
    \begin{center}
    \includegraphics[width=.40\textwidth]{I2}
    \end{center}
    Thus
    \begin{align*}
    [I(F) + I(G)]&= T^{}_{[3,5]} + T^{}_{[2,3]}\\
                 &= T^{}_{[5,8]}
    \end{align*}
     but notice that the upper rank is $8$ which is absurd since the graph only has $7$ vertices and therefore the correspond adjacency matrix would be a $7 \times 7$. Therefore the maximum rank would be $7$ and to make this clear the maximum rank will be denote as subscript outside the bracket. Hence this can be written as $[I(F) + I(G)]_7 = T_{[5,7]}$.
    \begin{center}
    \includegraphics[width=.40\textwidth]{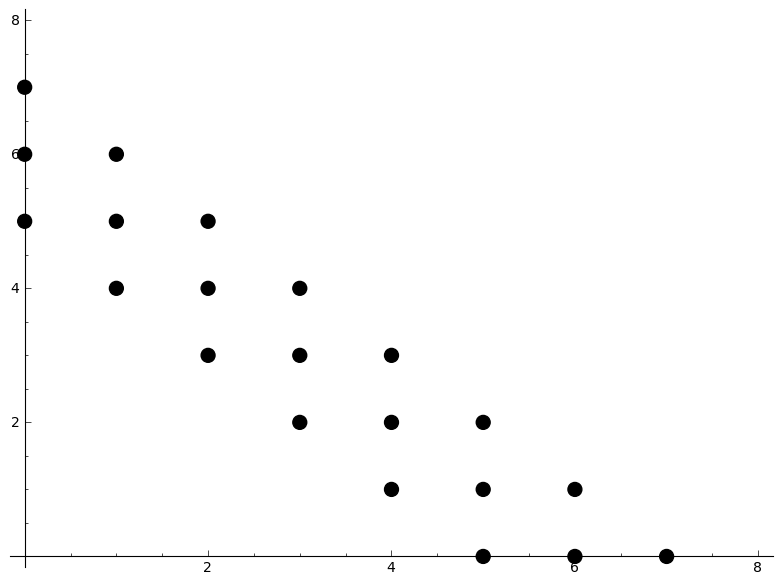}
    \end{center}
    \noindent

    Furthermore
    \begin{align*}
    I(F-v) &= I(C_5 - v)\\
           &= I(P_4)\\
           &=T^{}_{[4,5]}
    \end{align*}
    which has the inertia table below.
    \begin{center}
     \includegraphics[width=.40\textwidth]{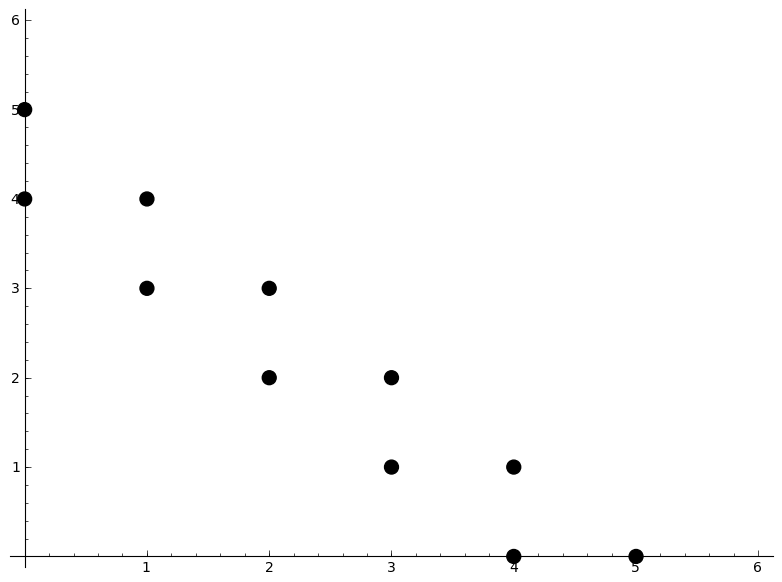}
    \end{center}
    \noindent

    Also
    \begin{align*}
    I(G-v)&=I(P_3-v)\\
          &=I(P_1 \bigcup P_1)\\
          &=I(P_1) + I(P_1)\\
          &= T^{}_{[0,1]} + T^{}_{[0,1]}\\
          &=T^{}_{[0,2]}.
    \end{align*}
    Note that here we implemented Proposition \ref{disjoint}.
    \begin{center}
    \includegraphics[width=.40\textwidth]{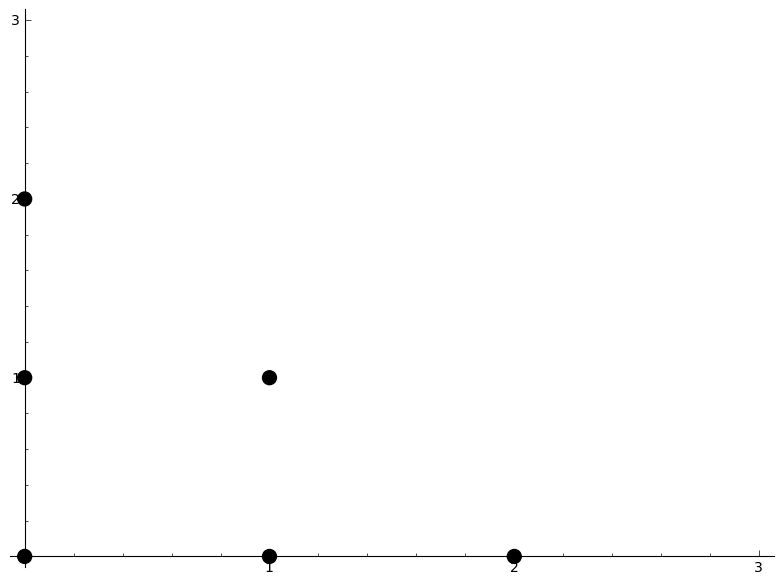}
    \end{center}
    \noindent
    Hence
    \begin{align*}
    [I(F-v) + I(G-v) + T^1_{[2,2]}]_7 &= [T^{}_{[4,5]} + T^{}_{[0,2]} + T^1_{[2,2]}]_7\\
                                       &= T^1_{[6,7]}
    \end{align*}
    \begin{center}
    \includegraphics[width=.40\textwidth]{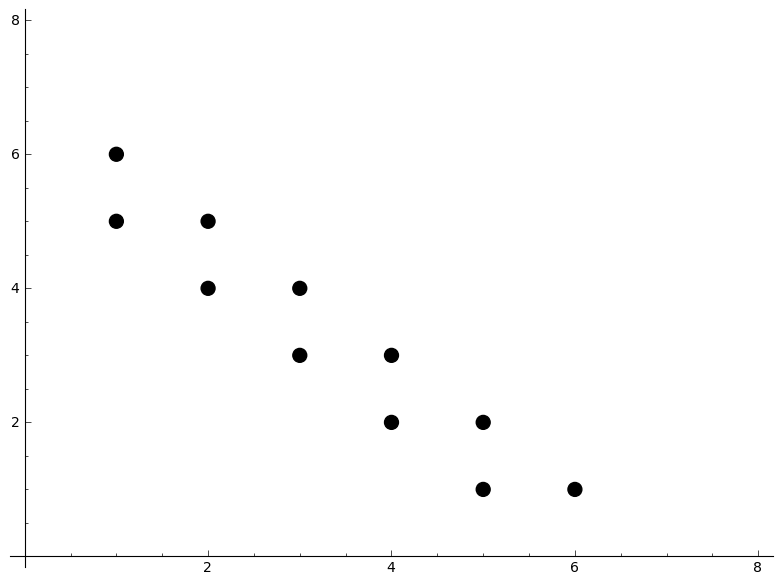}
    \end{center}
    \noindent
    Therefore
    \begin{align*}
    I(H) &=I(F \bigoplus_v G)\\
                                &=[I(C_5)+I(P_3)]_7  \bigcup   [I(P_4) + I(P_1 \bigcup P_1) + T_{[2,2]}]_7\\
                                &=T_{[5,7]} \bigcup T^1_{[6,7]}\\
                                &=T_{[5,7]}
    \end{align*}
    Thus the inertia table correspond to graph $H$ is:

    \begin{center}
    \includegraphics[width=.40\textwidth]{I8}
    \end{center}
    \noindent
    Observe that in this case $mr(H)=5$, and $I(H)$ is trapezoid.
    \end{example}

\section{Inertia Tables For Families Of Graphs }\label{sec:result1}
    We proceed with a series of lemmas that establish inertia of basic graphs, which then are used to prove theorem \ref{pulsar thm} and \ref{binary star thm}.

    \begin{lemma}\label{paths}
        If $G$ is a graph on $n$ vertices such that $G$ is the disjoint union of $k$ paths, then $I(G)=T^{}_{[n-k,n]}$
    \end{lemma}
    \begin{proof}
        Let $G$ be a graph on $n$ vertices such that $G$ is the disjoint union of $k$ paths.
        Label each path $P_{x_i}$, where $x_i$ is the number of vertices in the path $P_{x_i}$.
        So $\displaystyle \sum^{k}_{i=1}{x_i}=n$ and $I(G)=\displaystyle\sum^{k}_{i=1}{I(P_{x_i})}$. $\displaystyle \sum^{k}_{i=1}{I(P_{x_i})}=\sum^{k}_{i=1}{T^{}_{[x_i-1,x_i]}}=
        T^{}_{[\sum^{k}_{i=1}{(x_i-1)},\sum^{k}_{i=1}{x_i}]}=T^{}_{[\sum^{k}_{i=1}{(x_i)}-\sum^{k}_{i=1}{(1)},\sum^{k}_{i=1}{x_i}]}=T^{}_{[n-k,n]}$. Thus $\displaystyle I(G)=T^{}_{[n-k,n]}$
    \end{proof}

    \begin{lemma}\label{gs}
        If $G_p$ is a generalized star consisting of $n$ vertices , then $\displaystyle I(G_p)=T^{}_{[n-1,n]} \bigcup{T^{1}_{[n-p+1,n]}}$
    \end{lemma}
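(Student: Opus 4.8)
The plan is to induct on $p$, the number of paths (arms) meeting at the center vertex, using Theorem \ref{The Theorem} as the engine and Lemma \ref{paths} and Theorem \ref{pathinertia} to evaluate the pieces. The base case is $p=1$ (equivalently $p=2$), where $G_p$ is a single path $P_n$, so Theorem \ref{pathinertia} gives $I(G_1)=T_{[n-1,n]}$. One then checks that the claimed formula $T_{[n-1,n]} \bigcup T^1_{[n,n]}$ collapses to $T_{[n-1,n]}$, since every point of $T^1_{[n,n]}$ already lies in $T_{[n-1,n]}$; thus the base case holds.

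For the inductive step I would peel off one arm. Write $G_p = G_{p-1} \bigoplus_v P_b$, where $P_b$ is a single arm having the center $v$ as an endpoint (so $b\geq 2$) and $G_{p-1}$ is the generalized star on the remaining $p-1$ arms sharing the same center $v$. Here $G_{p-1}$ has $n' = n-b+1$ vertices and meets $P_b$ only at $v$, so Theorem \ref{The Theorem} applies with total $n = n'+b-1$ and yields $I(G_p) = [I(G_{p-1}) + I(P_b)]_n \bigcup [I(G_{p-1}-v) + I(P_b - v) + T^1_{[2,2]}]_n$.

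Next I would evaluate the second (``$-v$'') bracket, which produces the inset piece of the target. Deleting the center from $G_{p-1}$ leaves a disjoint union of $p-1$ paths on $n-b$ vertices, so Lemma \ref{paths} gives $I(G_{p-1}-v) = T_{[n-b-p+1,\,n-b]}$, while $P_b - v = P_{b-1}$ gives $I(P_b - v) = T_{[b-2,\,b-1]}$. Adding these together with $T^1_{[2,2]}$ by the addition rule (ranks add, insets add) produces $T^1_{[n-p+1,\,n+1]}$, and capping the maximum rank at $n$ leaves exactly $T^1_{[n-p+1,\,n]}$, the second summand of the claim. For the first bracket I would apply the induction hypothesis $I(G_{p-1}) = T_{[n'-1,\,n']} \bigcup T^1_{[n'-p+2,\,n']}$ together with $I(P_b)=T_{[b-1,\,b]}$, distributing the sum across the union; the two pieces become $T_{[n-1,\,n+1]}$ and $T^1_{[n-p+2,\,n+1]}$, which cap at $n$ to give $T_{[n-1,\,n]} \bigcup T^1_{[n-p+2,\,n]}$.

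Finally I would union the two brackets and observe that $T^1_{[n-p+2,\,n]} \subseteq T^1_{[n-p+1,\,n]}$ (same inset, narrower rank range), so the whole expression collapses to $T_{[n-1,\,n]} \bigcup T^1_{[n-p+1,\,n]}$, completing the induction. I expect the main obstacle to be purely bookkeeping rather than any genuine algebraic difficulty: one must cap the maximum rank at $n$ consistently in each bracket (the raw upper rank comes out as $n+1$ in every term), track that arm deletion yields precisely $p-1$ paths so Lemma \ref{paths} applies, and verify that the leftover inset trapezoid $T^1_{[n-p+2,\,n]}$ from the first bracket is genuinely absorbed into $T^1_{[n-p+1,\,n]}$ from the second.
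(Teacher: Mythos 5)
Your proposal is correct and follows essentially the same route as the paper's proof: induction on $p$, peeling off one arm via Theorem \ref{The Theorem}, evaluating the deleted-center bracket with Lemma \ref{paths} and Theorem \ref{pathinertia}, and absorbing the narrower inset trapezoid from the first bracket into $T^{1}_{[n-p+1,n]}$. Your bookkeeping is in fact slightly more careful than the paper's (whose intermediate steps contain minor index typos, e.g.\ writing $T^{1}_{[n-m,n]}$ where the first bracket actually yields $T^{1}_{[n-m+1,n]}$), but both arguments reach the same conclusion in the same way.
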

    \begin{proof}
        Suppose $G_p$ is a generalized star consisting of $n$ vertices and center $v$.
        Proceed by induction on $p$ the number of paths.
        Consider $G_1$ which has one path. Then $G_1$ $\cong{P_{n}}$, so $I(G_1)=T^{}_{[n-1,n]}$ = $T^{}_{[n-1,n]}$ $\displaystyle \bigcup{T^{1}_{[n-0,n]}}$.
        Now assume that $G_p$  has the inertia $I(G_p)=T^{}_{[n-1,n]}$ $\displaystyle \bigcup{T^{1}_{[n-p+1,n]}}$ for $p \leq m$.
        Now consider $G_{m+1}$, Notice
        $G_{m+1}\cong$ $\displaystyle H \bigoplus_v K$, where $H \cong G_m$ where $V|H|=x$ and $K$ $\cong{P_{y}}$ with $V|K|=y$.
        So
        $I(H)=T^{}_{[x-1,x]}$ $\displaystyle \bigcup{T^{1}_{[x-m+1,x]}}$ and $I(K)=T^{}_{[y-1,y]}$. Thus $[I(H) + I(K)]_n = T^{}_{[n-1,n]}$ $\displaystyle \bigcup T^{1}_{[n-m,n]}$.
        For $H-v$ which composed of $p$ disjoint paths. Hence $H-v$ $\cong{}$ $\displaystyle \bigcup^{m}_{i=1}{P_{n_{i}}}$ also $\sum{n_i}= x-1$ vertices. Thus $I(H-v)=T^{}_{[x-1-m,x]}$. Notice $K-v$ $\cong{P_{y-1}}$, hence $I(K-v)=T^{}_{[y-2,y]}$. Therefore $[I(H-v) + I(K-v) + T^{1}_ {[2,2]}]_n$
        = $T^{1}_{[n-m,n]}$. Thus $I(G_{m+1})=T^{}_{[n-1,n]}$ $\displaystyle \bigcup{T^{1}_{[n-m,n]} = T^{}_{[n-1,n]}}$ $\displaystyle \bigcup{T^{1}_{[n-(m+1)+1,n]}}$.
        Thus,the general inertia for $G_p$ is $I(G_p)=T^{}_{[n-1,n]}$ $\displaystyle \bigcup{T^{1}_{[n-p+1,n]}}$.
    \end{proof}

    \begin{lemma}\label{bouquet}
        If $G$ is a bouquet consisting of $n$ vertices and $k$ cycles, then $\displaystyle I(G)=T_{[n-k-1,n]}$.
    \end{lemma}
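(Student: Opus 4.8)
The plan is to prove this by induction on $k$, the number of cycles, in direct parallel with the proof of Lemma \ref{gs} for the generalized star. The base case $k=1$ is immediate: a bouquet with a single cycle is just $C_n$, so Theorem \ref{cycleinertia} gives $I(G) = T_{[n-2,n]} = T_{[n-1-1,n]}$, which matches the claimed formula. For the inductive step I would write a bouquet with $m+1$ cycles as $G_{m+1} \cong H \bigoplus_v K$, where $H$ is a bouquet on $x$ vertices with $m$ cycles, $K \cong C_y$ is a single cycle on $y$ vertices, and $v$ is the shared center; since the center is identified we have $n = x + y - 1$.

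With this decomposition I can apply Theorem \ref{The Theorem}, which expresses $I(G_{m+1})$ as the union of $[I(H)+I(K)]_n$ and $[I(H-v)+I(K-v)+T^1_{[2,2]}]_n$. For the first term, the inductive hypothesis gives $I(H)=T_{[x-m-1,x]}$ and Theorem \ref{cycleinertia} gives $I(K)=T_{[y-2,y]}$; adding these and capping the maximum rank at $n$ (since $x+y = n+1$) should yield $[I(H)+I(K)]_n = T_{[n-m-2,n]}$. The crucial observation for the second term is that deleting the center from any cycle produces a path, so $H-v$ is a disjoint union of $m$ paths on $x-1$ vertices while $K-v \cong P_{y-1}$. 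Lemma \ref{paths} and Theorem \ref{pathinertia} then give their inertias as $T_{[(x-1)-m,x-1]}$ and $T_{[y-2,y-1]}$ respectively, and after adding $T^1_{[2,2]}$ and capping I expect $[I(H-v)+I(K-v)+T^1_{[2,2]}]_n = T^1_{[n-m,n]}$.

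The final step is to simplify the union $T_{[n-m-2,n]} \bigcup T^1_{[n-m,n]}$. Since every point of $T^1_{[n-m,n]}$ has rank between $n-m$ and $n$ and lies off the axes, it is already contained in the larger trapezoid $T_{[n-m-2,n]}$, so the union collapses to $T_{[n-m-2,n]} = T_{[n-(m+1)-1,n]}$, closing the induction. I expect the main obstacle to be purely bookkeeping: tracking the counts $x$, $y$, and $n=x+y-1$ through the rank arithmetic and correctly applying the cap at the maximum rank $n$ in both terms. The one genuinely structural point, rather than a calculation, is recognizing that removing the shared center vertex converts every cycle of the bouquet into a disjoint path, which is exactly what lets Lemma \ref{paths} handle the $H-v$ term cleanly.
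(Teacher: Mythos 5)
Your proposal is correct and follows essentially the same route as the paper's own proof: induction on the number of cycles, decomposing the $(m+1)$-cycle bouquet as a bouquet with $m$ cycles joined at the center to a single cycle, applying Theorem \ref{The Theorem} with Lemma \ref{paths}, Theorem \ref{pathinertia}, and Theorem \ref{cycleinertia} for the pieces, and collapsing the union since $T^1_{[n-m,n]} \subseteq T_{[n-m-2,n]}$. The only difference is cosmetic (your labels $H$, $K$ versus the paper's $F$, $H$), and your absorption argument for the final union is, if anything, stated more carefully than in the paper, which contains a small typo ($T^1_{[n-m,n-1]}$ where $T^1_{[n-m,n]}$ is meant).
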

    \begin{proof}
        Let $G$ be a bouquet consisting of $n$ vertices and $k$ cycles joined at a common vertex $v$. Proceed by induction on $k$. Assume $G$ has one cycle, then $G\cong C_n$. Thus, $I(G)=I(C_n)=T_{[n-2,n]}=T_{[n-1-1,n]}$. Assume for $G$, a bouquet consisting of $k$ cycles, $I(G)=T_{[n-k-1,n]}$ for $1 \leq k \leq m$. Now consider $G$ a bouquet with $m+1$ cycles. Therefore, $\displaystyle G\cong F \bigoplus_v H$, where $F$ is a bouquet consisting of $x$ vertices and $m$ cycles and $H\cong C_y$, where $x+y=n+1$. So $\displaystyle I(G)=[I(F)+I(H)]_n\bigcup[I(F-v)+I(H-v)+T^1_{[2,2]}]_n$. $I(F)=T_{[x-m-1,x]}$ and $I(H)=T_{[y-2,y]}$. Thus $[I(F)+I(H)]_n=[T_{[x-m-1,x]}+T_{[y-2,y]}]_n=[T_{[x+y-m-3,x+y]}]_n=T_{[n-m-2,n]}$. $F-v$ is composed of $x-1$ vertices and $m$ disjoint paths. So $I(F-v)=T_{[x-m-1,x-1]}$. $H-v\cong P_{y-1}$ and $I(H-v)=T_{[y-2,y-1]}$. Therefore $\displaystyle [I(F-v)+I(H-v)+T^1_{[2,2]}]_n=[T_{[x-m-1,x-1]}+T_{[y-2,y-1]}+T^1_{[2,2]}]_n=[T^1_{[x+y-m-1,x+y]}]_n=T^1_{[n-m,n]}$. Thus $\displaystyle I(G)=T_{[n-m-2,n]}\bigcup T^1_{[n-m,n-1]}=T_{[n-m-2,n]}=T_{[n-(m+1)-1,n]}$. Hence the inertia of a bouquet with $n$ vertices and $k$ cycles is $I(G)= T_{[n-k-1,n]}$.
    \end{proof}

    \begin{lemma}\label{nova}
        If $G$ is a supernova on $n$ vertices consisting of $\alpha$ cycles and $\beta$ pendant paths, then $\displaystyle I(G)=T^{}_{[n-\alpha-1,n]}\bigcup T^1_{[n-\alpha-\beta+1,n]}$.
    \end{lemma}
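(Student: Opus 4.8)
The plan is to apply Theorem \ref{The Theorem} a single time, since by definition a supernova is already a join $G_p \bigoplus_v H$ of a generalized star $G_p$ with $\beta$ paths and a bouquet $H$ with $\alpha$ cycles, identified at their common center $v$. No induction is needed here: the whole argument is one decomposition together with the addition rules for the $T$ notation. Write $x = |G_p|$ and $y = |H|$, so that $n = x + y - 1$ and hence $x + y = n+1$. By Lemma \ref{gs}, $I(G_p) = T_{[x-1,x]} \bigcup T^1_{[x-\beta+1,x]}$, and by Lemma \ref{bouquet}, $I(H) = T_{[y-\alpha-1,y]}$.

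First I would compute the left-hand bracket $[I(G_p) + I(H)]_n$. Distributing the sum over the union and applying the addition formula for the $T$ notation gives
\[
[I(G_p)+I(H)] = T_{[x+y-\alpha-2,\,x+y]} \bigcup T^1_{[x+y-\alpha-\beta,\,x+y]}.
\]
Substituting $x+y = n+1$ turns this into $T_{[n-\alpha-1,\,n+1]} \bigcup T^1_{[n-\alpha-\beta+1,\,n+1]}$, and capping the maximum rank at $n$ (the adjacency matrix is $n \times n$) yields exactly $T_{[n-\alpha-1,n]} \bigcup T^1_{[n-\alpha-\beta+1,n]}$. Notice this is already the target expression, so the remaining task is to verify that the second bracket contributes nothing new.

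Next I would handle the right-hand bracket $[I(G_p-v) + I(H-v) + T^1_{[2,2]}]_n$. The key structural observation is that deleting the center splits each piece into disjoint paths: $G_p - v$ is the disjoint union of $\beta$ paths on $x-1$ vertices, and $H - v$ is the disjoint union of $\alpha$ paths on $y-1$ vertices (each cycle of the bouquet opens into a path when its single shared vertex is removed). By Lemma \ref{paths} these give $I(G_p - v) = T_{[x-1-\beta,\,x-1]}$ and $I(H-v) = T_{[y-1-\alpha,\,y-1]}$. Adding these to $T^1_{[2,2]}$ via the addition formula produces $T^1_{[x+y-\alpha-\beta,\,x+y]} = T^1_{[n-\alpha-\beta+1,\,n+1]}$, which after capping at $n$ is $T^1_{[n-\alpha-\beta+1,n]}$.

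Finally I would union the two brackets. Since the right bracket equals $T^1_{[n-\alpha-\beta+1,n]}$, which is literally the second trapezoid already present in the left bracket, the union absorbs it and leaves $I(G) = T_{[n-\alpha-1,n]} \bigcup T^1_{[n-\alpha-\beta+1,n]}$, as claimed. The main obstacle is not the arithmetic but the structural identification: one must verify carefully that $G_p - v$ really is $\beta$ disjoint paths and that $H - v$ really is $\alpha$ disjoint paths (so that Lemma \ref{paths} applies with the correct counts), and keep honest track of the bookkeeping $x+y = n+1$ together with the capping operation $[\cdot]_n$, since an off-by-one in the minimum or maximum rank would creep in precisely at those two points.
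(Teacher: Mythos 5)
Your proposal is correct and follows essentially the same route as the paper: a single application of Theorem \ref{The Theorem} to the decomposition of the supernova into a generalized star and a bouquet joined at the center, using Lemmas \ref{gs}, \ref{bouquet}, and \ref{paths}, with the second bracket absorbed into the first. The only difference is cosmetic labeling of the two pieces; the arithmetic and conclusions match the paper's proof exactly.
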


    \begin{proof}
        Let $G$ be a supernova on $n$ vertices consisting of $\alpha$ cycles, $\beta$ pendant paths,and center vertex $v$.
        Then $\displaystyle G=F \bigoplus_v H$ where $F$ is a bouquet on $x$ vertices consisting of $\alpha$ cycles with center $v$, and $H$ is generalized star on $y$ vertices consisting of $\beta$ pendant paths, where $x+y=n+1$.
        So $\displaystyle I(G)=[I(F)+I(H)]_n\bigcup[I(F-v)+I(H-v)+T^1_{[2,2]}]_n$. $I(F)=T_{[x-\alpha-1,x]}$, and  $\displaystyle I(H)=T_{[y-1,y]}\bigcup T^1_{[y-\beta+1,y]}$.
        Thus $\displaystyle [I(F)+I(H)]_n=[T_{[x-\alpha-1,x]}+(T_{[y-1,y]}$
        $\displaystyle \bigcup T^1_{[y-\beta+1,y]})]_n=[T_{[x+y-\alpha-2,x+y]}$
        $\displaystyle\bigcup T^1_{[x+y-\alpha-\beta,x+y]}]_n =T_{[n-\alpha-1,n]}$ \newline $\displaystyle \bigcup T^1_{[n-\alpha-\beta+1,n]}$.
        Observe $F-v$ is composed of $x-1$ vertices and $\alpha$ disjoint paths, and $H-v$ is composed of $y-1$ vertices and $\beta$ disjoint paths.
        So $\displaystyle I(F-v)=T_{[x-\alpha-1,x-1]}$ and $I(H-v)=T_{[y-\beta-1,y-1]}$.
        Therefore $\displaystyle [I(F-v)+I(H-v)+T^1_{[2,2]}]_n$ $\displaystyle =[T_{[x-\alpha-1,x-1]}+T_{[y-\beta-1,y-1]}+T^1_{[2,2]}]_n$ $\displaystyle =[T^1_{[x+y-\alpha-\beta,x+y]}]_n=$
        $\displaystyle T^1_{[n-\alpha-\beta+1,n]}$.
        Thus $\displaystyle I(G)=(T_{[n-\alpha-1,n]}\bigcup T^1_{[n-\alpha-\beta+1,n]})\bigcup(T^1_{[n-\alpha-\beta+1,n]}) =T_{[n-\alpha-1,n]}$ \newline $\displaystyle\bigcup T^1_{[n-\alpha-\beta+1,n]}$
    \end{proof}

    \begin{thm}\label{pulsar thm}
        If $G$ is a pulsar on n vertices consisting of two supernovas with $\alpha_1$ cycles and $\beta_1$ pendant paths, and $\alpha_2$ cycles and $\beta_2$ pendant paths respectively, then $I(G)$ is: \newline $\displaystyle T_{[n-\alpha_1-\alpha_2-2,n]}\bigcup T^1_{[n-\alpha_1-\alpha_2-\beta_1,n]}\bigcup T^1_{[n-\alpha_1-\alpha_2-\beta_2,n]}\bigcup T^2_{[n-\alpha_1-\alpha_2-\beta_1-\beta_2,n]}$.
    \end{thm}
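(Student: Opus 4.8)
The plan is to realize the pulsar as an iterated vertex-join and apply Theorem \ref{The Theorem} twice. Write $S_1$ and $S_2$ for the two supernovas, with parameters $(\alpha_1,\beta_1)$ and $(\alpha_2,\beta_2)$, and $C_m$ for the cycle, with $v,u\in V(C_m)$ non-adjacent. Then the pulsar is $G=(S_1\bigoplus_v C_m)\bigoplus_u S_2$. Setting $M=S_1\bigoplus_v C_m$ so that $G=M\bigoplus_u S_2$, Theorem \ref{The Theorem} gives $I(G)=[I(M)+I(S_2)]_n\bigcup[I(M-u)+I(S_2-u)+T^1_{[2,2]}]_n$. The whole argument then reduces to pinning down the four inertias $I(M)$, $I(S_2)$, $I(M-u)$, $I(S_2-u)$ in closed form and grinding the $T$-notation arithmetic.

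First I would observe that attaching $C_m$ at the center $v$ of $S_1$ merely adds one more cycle to the bouquet, so $M$ is itself a supernova with $\alpha_1+1$ cycles and $\beta_1$ pendant paths. Hence Lemma \ref{nova} applies directly and yields $I(M)=T_{[|M|-\alpha_1-2,|M|]}\bigcup T^1_{[|M|-\alpha_1-\beta_1,|M|]}$. Likewise $I(S_2)$ comes straight from Lemma \ref{nova}, while $S_2-u$ (the supernova minus its own center) is a disjoint union of $\alpha_2+\beta_2$ paths, so Lemma \ref{paths} supplies $I(S_2-u)$.

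The key step, and the one I expect to be the main obstacle, is computing $I(M-u)$. Because $v$ and $u$ are non-adjacent on $C_m$, deleting $u$ does not simply remove a cycle: the cycle $C_m$ opens into the path $P_{m-1}$ with $v$ an interior vertex, so the two arcs from $v$ to the former neighbors of $u$ become two new pendant paths hanging off the center. Thus $M-u$ is again a supernova, now with $\alpha_1$ cycles and $\beta_1+2$ pendant paths, and Lemma \ref{nova} gives its inertia. Getting this ``$+2$ pendant paths'' bookkeeping right, and checking it survives the degenerate cases where an arc is a single vertex or where $\alpha_1=0$ (so Lemma \ref{nova} collapses to the generalized-star formula of Lemma \ref{gs}), is the crux; everything after is routine.

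Finally I would substitute these four inertias into the join formula, distribute the sums across the unions, cap every maximum rank at $n$ via the $[\,\cdot\,]_n$ convention, and merge overlapping trapezoids. The first bracket $[I(M)+I(S_2)]_n$ produces four trapezoids matching the claimed outline, except that its inset-$2$ piece carries the slightly larger minimum rank $n-\alpha_1-\alpha_2-\beta_1-\beta_2+2$; the second bracket then contributes $T^2_{[n-\alpha_1-\alpha_2-\beta_1-\beta_2,n]}$, whose lower minimum rank absorbs that piece, and the two identical inset-$1$ pieces coalesce. Collecting the survivors gives exactly $T_{[n-\alpha_1-\alpha_2-2,n]}\bigcup T^1_{[n-\alpha_1-\alpha_2-\beta_1,n]}\bigcup T^1_{[n-\alpha_1-\alpha_2-\beta_2,n]}\bigcup T^2_{[n-\alpha_1-\alpha_2-\beta_1-\beta_2,n]}$, as claimed.
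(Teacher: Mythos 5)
Your proposal is correct and follows essentially the same route as the paper: absorb the connecting cycle into the first supernova to get a supernova with $\alpha_1+1$ cycles and $\beta_1$ pendant paths, apply Theorem \ref{The Theorem} at the second center $u$, use Lemma \ref{nova} and Lemma \ref{paths} for the four pieces (including the key observation that deleting $u$ opens the cycle into two new pendant paths, giving $\alpha_1$ cycles and $\beta_1+2$ pendant paths), and then merge the resulting trapezoids exactly as you describe. Your explicit flagging of the degenerate cases (short arcs, $\alpha_1=0$) is a point of care the paper's proof omits, but it does not change the argument.
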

    \begin{proof}
        Let $G$ be a pulsar on n vertices consisting of two supernovas with $\alpha_1$ cycles and $\beta_1$ pendant paths, and $\alpha_2$ and cycles $\beta_2$ pendant paths respectively. Let $v$ be the central vertex of one of the supernovas. So $\displaystyle G \cong F \bigoplus_v H$ where $F$ is a supernova on $x$ vertices with $\alpha_1+1$ cycles and $\beta_1$ pendant paths, and $H$ is a supernova on $y$ vertices with $\alpha_2$ cycles and $\beta_2$ pendant paths, where $x+y=n+1$. So $\displaystyle I(G)=[I(F)+I(H)]_n\bigcup[I(F-v)+I(H-v)+T^1_{[2,2]}]_n$. Then $\displaystyle I(F)=T_{[x-\alpha_1-2,x]}$ $\displaystyle\bigcup T^1_{[x-\alpha_1-\beta_1,x]}$ and $\displaystyle I(H)=T_{[y-\alpha_2-1,y]}\bigcup T^1_{[x-\alpha_2-\beta_2+1,y]}$.\newline Thus $\displaystyle [I(F)+I(H)]_n=[(T_{[x-\alpha_1-2,x]}\bigcup T^1_{[y-\alpha_1-\beta_1,x]})+(T_{[y-\alpha_2-1,y]}$ \newline
        $\displaystyle \bigcup T^1_{[y-\alpha_2-\beta_2+1,y]})]_n=
        [T_{[x-\alpha_1-2+y-\alpha_2-1,x+y]}$
        $\displaystyle\bigcup T^1_{[x-\alpha_1-\beta_1+y-\alpha_2-1,x+y]}$ \newline
        $\displaystyle\bigcup T^1_{[x-\alpha_1-2+y-\alpha_2-\beta_2+1,x+y]}$
        $\displaystyle\bigcup T^2_{[x-\alpha_1-\beta_1+y-\alpha_2-\beta_2+1,x+y]}]_n=
        T_{[n-\alpha_1-\alpha_2-2,n]}$
        $\displaystyle\bigcup T^1_{[n-\alpha_1-\alpha_2-\beta_1,n]}$
        $\displaystyle\bigcup T^1_{[n-\alpha_1-\alpha_2-\beta_2,n]}$
        $\displaystyle\bigcup T^2_{[n-\alpha_1-\alpha_2-\beta_1-\beta_2+2,n]}$.
        $F-v$ is a supernova on $x-1$ vertices consisting of $\alpha_1$ cycles and $\beta_1+2$ pendant paths, and $H-v$ is composed of $y-1$ vertices and $\alpha_2+\beta_2$ disjoint paths. So $\displaystyle I(F-v)=T_{[x-1-\alpha_1-1,x-1]}$
        $\displaystyle\bigcup T^1_{[x-1-\alpha_1-\beta_1-2+1,x-1]}=T_{[x-\alpha_1-2,x-1]}$
        $\displaystyle\bigcup T^1_{[x-\alpha_1-\beta_1-2,x-1]}$ and $I(H-v)=T_{[y-\alpha_2-\beta_2-1,y-1]}$. Therefore $\displaystyle [I(F-v)+I(H-v)+T^1_{[2,2]}]_n=[(T_{[x-\alpha_1-2,x-1]}\bigcup T^1_{[x-\alpha_1-\beta_1-2,x-1]})+T_{[y-\alpha_2-\beta_2-1,y-1]}+T^1_{[2,2]}]_n$ \newline
        $\displaystyle=[T^1_{[x-\alpha_1-2+y-\alpha_2-\beta_2-1+2,x-1+y-1+2]}$
        $\displaystyle\bigcup T^2_{[x-\alpha_1-\beta_1-2+y-\alpha_2-\beta_2-1+2,x-1+y-1+2]}]_n$
        $\displaystyle =T^1_{[n-\alpha_1-\alpha_2-\beta_2,n]}\bigcup T^2_{[n-\alpha_1-\alpha_2-\beta_1-\beta_2,n]}$.
        Thus $\displaystyle I(G)=T_{[n-\alpha_1-\alpha_2-2,n+1]}$ \newline
        $\displaystyle\bigcup T^1_{[n-\alpha_1-\alpha_2-\beta_1,n+1]}\bigcup T^1_{[n-\alpha_1-\alpha_2-\beta_2,n+1]}$
        $\displaystyle\bigcup T^2_{[n-\alpha_1-\alpha_2-\beta_1-\beta_2+2,n+1]}$ \newline
        $\displaystyle\bigcup T^1_{[n-\alpha_1-\alpha_2-\beta_2,n+1]}\bigcup T^2_{[n-\alpha_1-\alpha_2-\beta_1-\beta_2,n+1]}$
        $\displaystyle = T_{[n-\alpha_1-\alpha_2-2,n]}$ \newline
        $\displaystyle\bigcup T^1_{[n-\alpha_1-\alpha_2-\beta_1,n]}\bigcup T^1_{[n-\alpha_1-\alpha_2-\beta_2,n]}\bigcup T^2_{[n-\alpha_1-\alpha_2-\beta_1-\beta_2,n]}$.
    \end{proof}

    \begin{thm}\label{binary star thm}
        If $G$ is a binary star on $n$ vertices. Where the pendant path shared by $H$ and $K$ is denoted as $P_w$
        then the inertias of $G$ have the following forms:
       \noindent
        \begin{enumerate}
        \item {For $w = 2$, $I(G)$ is \\
         $\displaystyle T^{}_{[n-\alpha-\delta-1,n]} \bigcup{T^{1}_{[n-\alpha-\gamma-\delta+1,n]}}$ $\displaystyle\bigcup{T^{1}_{[n-\alpha-\beta-\delta+1,n]}}$ $\displaystyle\bigcup{T^{2}_{[n-\alpha-\beta-\gamma-\delta+3,n]}}$.}\\
        \item {For $ w >2$,  $I(G)$ is\\
        $\displaystyle T^{}_{[n-\alpha-\delta-1,n]} \bigcup{T^{1}_{[n-\alpha-\gamma-\delta+1,n]}}$
        $\displaystyle \bigcup{T^{1}_{[n-\alpha-\beta-\delta+1,n]}}$
        $\displaystyle \bigcup{T^{2}_{[n-\alpha-\beta-\gamma-\delta+2,n]}}$.}
        \end{enumerate}

         Where $H$ is the supernova subgraph consisting of $\alpha$ cycles,and $\beta$ paths with center $v$,
         and $K$ is the supernovas subgraph combining of  $\delta$ cycles, and $\gamma$ paths with centers$u$.  $(v\neq u)$ and $v$,$u$ are connected by $P_w$ for $P_w\in \gamma$ paths of $K$.
    \end{thm}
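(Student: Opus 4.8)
The plan is to reduce the binary star to a single application of the cut-vertex formula, Theorem \ref{The Theorem}, in exactly the way the pulsar was handled in Theorem \ref{pulsar thm}; the only structural change is that the two supernovas are now bridged by a path $P_w$ rather than by a cycle, and this is what forces the two cases. Concretely, I would take the cut vertex to be $v$, the center of the supernova $H$, which is simultaneously the terminal vertex of the bridging path $P_w$ that sits inside $K$. Writing $x=|H|$ and $y=|K|$ so that $x+y=n+1$, I would record $G\cong F\bigoplus_v G'$ with $F=H$ a supernova on $x$ vertices with $\alpha$ cycles and $\beta$ pendant paths, and $G'=K$ a supernova on $y$ vertices with $\delta$ cycles and $\gamma$ pendant paths (one of which is $P_w$, ending at $v$). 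Theorem \ref{The Theorem} then gives $I(G)=[I(F)+I(G')]_n\bigcup[I(F-v)+I(G'-v)+T^1_{[2,2]}]_n$, and everything reduces to evaluating the two bracketed branches.

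For the first branch I would feed both $I(F)$ and $I(G')$ through the supernova formula, Lemma \ref{nova}, writing $I(F)=T_{[x-\alpha-1,x]}\bigcup T^1_{[x-\alpha-\beta+1,x]}$ and $I(G')=T_{[y-\delta-1,y]}\bigcup T^1_{[y-\delta-\gamma+1,y]}$. Distributing the sum over the four unions and adding with the $T$-rule $T^{k_1}_{[m_1,n_1]}+T^{k_2}_{[m_2,n_2]}=T^{k_1+k_2}_{[m_1+m_2,n_1+n_2]}$, then substituting $x+y=n+1$ and capping the upper rank at $n$, I expect $[I(F)+I(G')]_n$ to collapse to $T_{[n-\alpha-\delta-1,n]}\bigcup T^1_{[n-\alpha-\gamma-\delta+1,n]}\bigcup T^1_{[n-\alpha-\beta-\delta+1,n]}\bigcup T^2_{[n-\alpha-\beta-\gamma-\delta+3,n]}$, which is already the full $w=2$ shape.

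The two cases are then manufactured entirely by the second branch, and in particular by $G'-v$. Deleting the center $v$ from $F=H$ breaks every cycle and detaches every pendant path, so $F-v$ is a disjoint union of $\alpha+\beta$ paths on $x-1$ vertices and Lemma \ref{paths} gives its inertia immediately. Deleting the leaf $v$ from $G'=K$ behaves differently according to $w$: when $w=2$ the bridge is the single edge $uv$, so its removal erases one whole pendant path and $G'-v$ is a supernova with $\delta$ cycles and $\gamma-1$ paths; when $w>2$ the bridge only loses its last vertex, so $G'-v$ is still a supernova with $\delta$ cycles and $\gamma$ paths. Applying Lemma \ref{nova} in each case, adding $T^1_{[2,2]}$, capping at $n$, and unioning against the first branch---absorbing any trapezoid contained in another via the ``lower the upper rank to avoid overlap'' reduction illustrated in the earlier examples---should leave the two advertised forms, which differ only in the inset-$2$ trapezoid ($+3$ for $w=2$, $+2$ for $w>2$).

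The main obstacle I anticipate is bookkeeping rather than conceptual: getting the internal lower ranks of the $T^1$ and $T^2$ pieces right, since each depends on an exact count of vertices and component paths in $F-v$ and $G'-v$, and an off-by-one there propagates straight into the final insets. I would be especially careful at the union-absorption step, because whether the inset-$2$ trapezoid coming from the second branch swallows the one coming from the first (and likewise for the inset-$1$ pieces) is precisely the place where the $w=2$ versus $w>2$ distinction must survive; I would verify each containment $T^{k}_{[m',n]}\subseteq T^{k}_{[m,n]}$ by checking $m\le m'$ rather than trusting the emerging pattern, and cross-check the two resulting tables against the degenerate case $\alpha=\delta=0$ (a binary star built from two generalized stars), where the inertia can be recomputed directly as a sanity check.
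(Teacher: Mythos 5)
Your strategy is exactly the paper's: decompose at the center $v$ of $H$ via Theorem \ref{The Theorem}, evaluate $I(H)$ and $I(K)$ with Lemma \ref{nova}, evaluate $I(H-v)$ with Lemma \ref{paths}, and split into cases according to whether $K-v$ has $\gamma-1$ pendant paths ($w=2$) or $\gamma$ pendant paths ($w>2$). Your intermediate computations are also correct and coincide with the paper's: the first branch is $T_{[n-\alpha-\delta-1,n]}\cup T^1_{[n-\alpha-\gamma-\delta+1,n]}\cup T^1_{[n-\alpha-\beta-\delta+1,n]}\cup T^2_{[n-\alpha-\beta-\gamma-\delta+3,n]}$, and the second branch comes out to $T^1_{[n-\alpha-\beta-\delta,n]}\cup T^2_{[n-\alpha-\beta-\gamma-\delta+3,n]}$ for $w=2$ and $T^1_{[n-\alpha-\beta-\delta,n]}\cup T^2_{[n-\alpha-\beta-\gamma-\delta+2,n]}$ for $w>2$; the paper displays these same expressions.

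The gap is your final assertion that the union of the two branches ``should leave the two advertised forms.'' It does not, and the containment test you yourself prescribe ($T^k_{[m',n]}\subseteq T^k_{[m,n]}$ iff $m\le m'$) shows why: since $n-\alpha-\beta-\delta\le n-\alpha-\beta-\delta+1$, the second branch's $T^1_{[n-\alpha-\beta-\delta,n]}$ \emph{contains} the first branch's $T^1_{[n-\alpha-\beta-\delta+1,n]}$, not the reverse, so in both cases the union retains the lower rank $n-\alpha-\beta-\delta$. A faithful execution of your plan therefore yields $T_{[n-\alpha-\delta-1,n]}\cup T^1_{[n-\alpha-\gamma-\delta+1,n]}\cup T^1_{[n-\alpha-\beta-\delta,n]}\cup T^2_{[\cdot,n]}$ (with $+3$ or $+2$ in the last subscript as appropriate), which is a strictly larger set than the statement whenever $\beta\ge 2$ and $\gamma\le\beta$. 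The paper's own proof has precisely this flaw: its displayed second-branch term $T^1_{[n-\alpha-\beta-\delta,n]}$ silently vanishes from its final line. Your proposed sanity check with $\alpha=\delta=0$ exposes the problem: take $\beta=3$, $\gamma=2$, $w=2$, so $G$ is the double star on $n=6$ vertices, i.e.\ $K_{1,3}\bigoplus_v P_3$ with $v$ the star's center and an endpoint of the $P_3$. The stated formula gives $T_{[5,6]}\cup T^1_{[4,6]}$, hence minimum rank $4$; but applying Theorem \ref{The Theorem} directly to this decomposition gives first branch $[(T_{[3,4]}\cup T^1_{[2,4]})+T_{[2,3]}]_6=T_{[5,6]}\cup T^1_{[4,6]}$ and second branch $[T_{[0,3]}+T_{[1,2]}+T^1_{[2,2]}]_6=T^1_{[3,6]}$, so $I(G)=T_{[5,6]}\cup T^1_{[3,6]}$ and the minimum rank is $3$ (consistent with the tree fact that minimum rank equals $n$ minus the path cover number, here $6-3=3$). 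So the defect is not in your method, which is the paper's method, but in the claim that it lands on the stated formulas: carried out with the care you promise, it instead produces a corrected theorem in which the term $T^1_{[n-\alpha-\beta-\delta+1,n]}$ is replaced by $T^1_{[n-\alpha-\beta-\delta,n]}$ in both cases.
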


    \begin{proof}
         Let $G$ be a binary star consisting of $n$ vertices such that $H$ and $K$ are Supernovas composed of $x$ vertices, $\alpha$ cycles, $\beta$ paths, $y$ vertices, $\delta$ cycles, $\gamma$ paths with center vertices $v$ and $u$ $(v\neq u)$  respectively. Proceed by implementing the the inertia formula from theorem \ref{The Theorem}.
         So $I(H)=T^{}_{[x-\alpha-1,x]}$ $\displaystyle\bigcup{T^{1}_{[x-\alpha-\beta+1,x]}}$ and $I(K)=T^{}_{[y-\delta-1,y]}$ $\displaystyle\bigcup{T^{1}_{[y-\delta-\gamma+1,y]}}$.
         Therefore $[I(H) + I(K)]_n = T^{}_{[n-\alpha-\delta-1,n]}$ $\displaystyle \bigcup{T^{1}_{[n-\alpha-\delta-\gamma+1,n]}}$ $\displaystyle\bigcup{T^{1}_{[n-\alpha-\beta-\delta+1,n]}}$
         $\displaystyle\bigcup{T^{2}_{[n-\alpha-\beta-\gamma-\delta+3,n]}}$.
         Observe $H-v$ is the collection of disjoint $\alpha + \beta$ paths consist of $x-1$ vertices. Hence, $I(H-v)$ $= T^{}_{[x-\alpha-\beta-1,x-1]}$.\\

         \noindent
         \textbf{Case I:} Here consider $ w=2 $. Hence $K-v$ is a supernova consisting of $y-1$ vertices with $\gamma-1$ paths and $\delta$ cycles. So $I(K-v) = T^{}_{[y-\delta-2,y-1]}$ $\displaystyle\bigcup{T^{1}_{[y-\delta-\gamma+1,y-1]}}$.
         Thus, $[I(H-v)+I(K-v)+T^{1}_{[2,2]}]_n$
         $=[T{}_{[n-\alpha-\beta-\delta-2,n]}$ $\displaystyle\bigcup T^{1}_{[n-\alpha-\beta-\delta-\gamma+1,n]}$ $ + T^{1}_{[2,2]}]_n$
         $ = T^{1}_{[n-\alpha-\beta-\delta,n]}$
         $\displaystyle\bigcup{T^{2}_{[n-\alpha-\beta-\gamma-\delta+3,n]}}$. Therefore \newline
         \noindent
         \begin{align*}
         I(G)&=
         (T^{}_{[n-\alpha-\delta-1,n]} \displaystyle\bigcup{T^{1}_{[n-\alpha-\delta-\gamma+1,n]}} \displaystyle\bigcup{T^{1}_{[n-\alpha-\beta-\delta+1,n]}}
         \displaystyle\bigcup{T^{2}_{[n-\alpha-\beta-\gamma-\delta+3,n]}})
         \end{align*}
         \indent \indent \indent
         $\displaystyle\bigcup(T^{1}_{[n-\alpha-\beta-\delta,n]}$ $\displaystyle\bigcup{T^{2}_{[n-\alpha-\beta-\gamma-\delta+3,n]}})$\\

         \begin{align*}
         \displaystyle &=T^{}_{[n-\alpha-\delta-1,n]} \displaystyle\bigcup{T^{1}_{[n-\alpha-\gamma-\delta+1,n]}} \displaystyle\bigcup{T^{1}_{[n-\alpha-\beta-\delta+1,n]}} \displaystyle\bigcup{T^{2}_{[n-\alpha-\beta-\gamma-\delta+3,n]}}.\\
         \end{align*}

         \noindent
         \textbf{Case II:} Here consider $ w>2 $. Thus $K-v$ is a Supernova  consisting of $y-1$ vertices and $\gamma$ paths. So
         $\displaystyle I(K-v) = T^{}_{[y-\delta-2,y-1]}\bigcup{T^{1}_{[y-\delta-\gamma,y-1]}}$.
         Hence, $[I(H-v)+I(K-v)+T^{1}_{[2,2]}]_n $
         $=[T{}_{[n-\alpha-\beta-\delta-2,n]}$
         $\displaystyle\bigcup$ $ T^{1}_{[n-\alpha-\beta-\delta-\gamma,n]}$ $ + T^{1}_{[2,2]}]_n$
         $=T^{1}_{[n-\alpha-\beta-\delta,n]}$
         $\displaystyle\bigcup{T^{2}_{[n-\alpha-\beta-\gamma-\delta+2,n]}} $. Therefore \newline
         \noindent
        \begin{align*}
        I(G)&=(T^{}_{[n-\alpha-\delta-1,n]}
        \displaystyle\bigcup{T^{1}_{[n-\alpha-\delta-\gamma+1,n]}} \displaystyle\bigcup{T^{1}_{[n-\alpha-\beta-\delta+1,n]}}
        \displaystyle\bigcup{T^{2}_{[n-\alpha-\beta-\gamma-\delta+3,n]}})
        \end{align*}
        \indent \indent \indent
        $\displaystyle\bigcup(T^{1}_{[n-\alpha-\beta-\delta,n]}$
        $\displaystyle\bigcup{T^{2}_{[n-\alpha-\beta-\gamma-\delta+2,n]}})$.\\
        \indent
        $= T^{}_{[n-\alpha-\delta-1,n]}
        \displaystyle\bigcup{T^{1}_{[n-\alpha-\gamma-\delta+1,n]}}
        \displaystyle\bigcup{T^{1}_{[n-\alpha-\beta-\delta+1,n]}}
        \displaystyle\bigcup{T^{2}_{[n-\alpha-\beta-\gamma-\delta+2,n]}}$.

    \end{proof}

\begin{corollary}\label{bipartite corollary}
    If $\displaystyle G = K_{a,b} \bigoplus_v K_{c,d}$, then $mr(G)$ does not exceed $4$.

\begin{proof}
    The inertia for a complete bipartite graph $K_{a,b}$, where $b \geq a$, is: \newline \newline
    $\displaystyle T^{}_{[b,a+b]} \bigcup T^{1}_{[2,b-1]}$ \newline \newline
    Using theorem \ref{The Theorem}, the inertia of $G$ has $4$ cases depending on the location of $v$: \newline \newline
    \textbf{Case I:} $ \left[ I(K_{a,b}) + I(K_{c,d}) \right]_n \bigcup \left[ I(K_{a-1,b}) + I(K_{c-1,d}) + T^{1}_{[2,2]} \right]_n $ \newline \newline
    This case has $v$ in the sets $A$ and $C$. \newline \newline
    \textbf{Case II:} $ \left[ I(K_{a,b}) + I(K_{c,d}) \right]_n \bigcup \left[ I(K_{a-1,b}) + I(K_{c,d-1}) + T^{1}_{[2,2]} \right]_n $ \newline \newline
    This case has $v$ in the sets $A$ and $D$. \newline \newline
    \textbf{Case III:} $ \left[ I(K_{a,b}) + I(K_{c,d}) \right]_n \bigcup \left[ I(K_{a,b-1}) + I(K_{c-1,d}) + T^{1}_{[2,2]} \right]_n $ \newline \newline
    This case has $v$ in the sets $B$ and $C$. \newline \newline
    \textbf{Case IV:} $ \left[ I(K_{a,b}) + I(K_{c,d}) \right]_n \bigcup \left[ I(K_{a,b-1}) + I(K_{c,d-1}) + T^{1}_{[2,2]} \right]_n $ \newline \newline
    This case has $v$ in the sets $B$ and $D$. \newline \newline
    These $4$ inertia cases each include
    $ I(K_{a,b}) + I(K_{c,d}) $ \newline \newline
    Thus,
    $ \left[ T^{}_{[b,a+b]} \bigcup T^{1}_{[2,b-1]} \right] + \left[ T^{}_{[d,c+d]} \bigcup T^{1}_{[2,d-1]} \right] $
    is included.  This is: \newline \newline
    $ T^{}_{[b+d,a+b+c+d-1]} \bigcup T^{1}_{[2+b,a+b+d-1]} \bigcup T^{1}_{[2+d,b+c+d-1]} \bigcup T^{2}_{[4,b+d-2]} $ \newline \newline
    Therefore the minimum rank line of $G$ does not exceed $4$.
\end{proof}

\end{corollary}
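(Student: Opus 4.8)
The plan is to apply the join formula of Theorem \ref{The Theorem} to $G = K_{a,b} \bigoplus_v K_{c,d}$ and to exploit a structural observation: regardless of where the identified vertex $v$ sits in the two bipartitions, the resulting inertia $I(G) = [I(K_{a,b}) + I(K_{c,d})]_n \bigcup [I(K_{a,b}-v) + I(K_{c,d}-v) + T^1_{[2,2]}]_n$ always contains the single common block $[I(K_{a,b}) + I(K_{c,d})]_n$. Since $I(G)$ is a union of trapezoids and $mr(G)$ is the smallest value of $\pi + \nu$ over all points of $I(G)$, enlarging the table by a union can only keep the minimum rank the same or lower it. Hence it suffices to locate a point of rank at most $4$ inside the one common block; the three case-dependent blocks arising from the four possible positions of $v$ can be ignored for the purpose of an upper bound.

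First I would record the inertia of a complete bipartite graph, namely $I(K_{a,b}) = T_{[b,a+b]} \bigcup T^1_{[2,b-1]}$ for $b \geq a$, and likewise $I(K_{c,d}) = T_{[d,c+d]} \bigcup T^1_{[2,d-1]}$ for $d \geq c$. Next I would add these two inertias using the distributive rule for the union operator from Section \ref{sec:notation}, expanding the sum of the two two-term unions into four summed trapezoids via the coordinatewise rule $T^{k_1}_{[m_1,n_1]} + T^{k_2}_{[m_2,n_2]} = T^{k_1+k_2}_{[m_1+m_2,n_1+n_2]}$. The crucial summand is the cross term $T^1_{[2,b-1]} + T^1_{[2,d-1]} = T^2_{[4,b+d-2]}$, whose lower rank endpoint is $4$; its minimum-rank point $(2,2)$ lies in $I(G)$ and witnesses a realization of rank exactly $4$, giving $mr(G) \leq 4$.

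The hard part will be handling the degenerate small cases and the truncation implicit in the $[\,\cdot\,]_n$ notation. The trapezoid $T^2_{[4,b+d-2]}$ is nonempty precisely when $b + d \geq 6$, so the clean rank-$4$ witness requires the two factors to be large enough; when $b+d < 6$ the complete bipartite pieces are small and $mr(G) \leq 4$ should follow directly from $|G|$ and the explicit inertia formula. I would also verify that the top truncation $[\,\cdot\,]_n$, which only caps the maximum rank at $n = a+b+c+d-1$, never disturbs the minimum-rank (lower) endpoints of the trapezoids and so is irrelevant to the upper bound. With these boundary checks in place, the presence of a rank-$4$ point in the common block forces $mr(G) \leq 4$ in every case.
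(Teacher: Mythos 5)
Your proposal follows essentially the same route as the paper's proof: both quote the complete bipartite inertia formula $T_{[b,a+b]} \bigcup T^1_{[2,b-1]}$, observe that all four placements of $v$ share the common block $\left[ I(K_{a,b}) + I(K_{c,d}) \right]_n$, distribute the sum of the two unions, and extract the summand $T^1_{[2,b-1]} + T^1_{[2,d-1]} = T^2_{[4,b+d-2]}$, whose lower endpoint $4$ bounds $mr(G)$. Your added care about the degenerate cases (when $b+d<6$ that trapezoid is empty, and one must instead note that the remaining summands have lower endpoints $b+d$, $b+2$, $d+2$, at least one of which is then at most $4$) and about the harmlessness of the $[\,\cdot\,]_n$ truncation for lower endpoints actually addresses details the paper's own proof glosses over.
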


\section{Conclusion}\label{sec:conc}
We successfully discovered the inertia sets for several families of graphs. We also altered the standard T-notation, used to describe inertia tables, in order to more efficiently account for bumps that often occur when plotting inertia sets. \newline \newline
We have observed that the inertia tables for many graphs have bumps, where the minimum rank line is removed from the axes. However, is it possible for inertia tables to have indents? That is, is it possible for the minimum rank line to have points missing in the center while still touching the axes? During the writing of this paper it was still unknown if such graphs exist.

\section{Bibliography}
    
\end{document}